\newtheorem{theo}{Theorem}[section]
\newtheorem{cor}[theo]{Corollary}
\newtheorem{ques}[theo]{Question}
\newtheorem{lemma}[theo]{Lemma}
\newcommand*\samethanks[1][\value{footnote}]{\footnotemark[#1]}
\author[1]{Zdeněk Dvořák\thanks{\url{rakdver@iuuk.mff.cuni.cz}. Supported by the ERC-CZ project LL2328 (Beyond the Four Color Theorem) of the Ministry of Education of Czech Republic.}}
\author[2]{Beatriz Martins\thanks{Supported by Projet ANR GODASse, Projet-ANR-24-CE48-4377.}}
\author[2]{Stéphan Thomassé\samethanks}
\author[2]{Nicolas~Trotignon\samethanks}
\affil[1]{Computer Science Institute, Charles University, Prague,\newline Czech Republic.}
\affil[2]{ENS de Lyon, CNRS, Université Claude Bernard Lyon 1,\newline LIP UMR 5668, 69342 Lyon Cedex 07, France.}
\begin{document}

\title{Lollipops, dense cycles and chords}

\date{\today}

\maketitle

\begin{abstract}
In 1980, Gupta, Kahn, and Robertson proved that every graph $G$ with minimum degree at least $k\geq 2$ contains a cycle $C$ containing  at least $k+1$ vertices each having at least $k$ neighbors in $C$ (so $C$ has at least $\frac{(k+1)(k-2)}{2}$ chords). In this work, we go further by showing that some of its edges can be contracted to obtain a graph with high minimum degree (we call such a minor of $C$ a \emph{cyclic minor}). We then investigate further graphs having cliques as cyclic minors, and show that minimum degree at least $O(k^2)$ guarantees a cyclic $K_k$-minor. 
\end{abstract}

\section{Introduction}
    Many theorems in graph theory state that a sufficiently high minimum degree in a graph guarantees the existence of some substructure that is in some sense dense, complex or well connected. We list below some classical examples:
    
    \begin{itemize}
        \item a highly connected subgraph~\cite{MADER72};
        \item a large clique as a minor~\cite{kos1982,DBLP:journals/dm/Vega83};
        \item a large clique as a topological minor~\cite{BBAT98};
        \item a large biclique as a subgraph or a subdivision of some prescribed graph as an induced subgraph~\cite{kuhn2004induced} and
        \item a $k$-linked subgraph~\cite{THOMAS2005309}.
    \end{itemize}

    Here, we add some items to this list, by exhibiting cycles that are dense in several ways as we explain now. 
    
    \subsection*{Many Chords}

    In~\cite{GUPTA198037}, the authors proved that if $G$ has minimum degree at least $k\geq 2$, then $G$ contains a cycle with at least $\frac{(k+1)(k-2)}{2}$ chords. An alternative proof to this was given in~\cite{kral03}. Here we refine the method used by~\cite{GUPTA198037} in order to obtain the following.

    \begin{theo}
        \label{th:main}
	If $G$ has minimum degree at least $k\geq 2$, then $G$ contains a cycle
	$C$ containing at least $k+1$ vertices each having at least $k$
	neighbors in $C$ (so $C$ has at least $\frac{(k+1)(k-2)}{2}$ chords).
    Moreover, in the graph  obtained by deleting all vertices not contained in $C$, there exist $X_1, X_2 \subseteq E(C)$ such that: 
    \begin{itemize}
	\item by contracting all edges in $X_1$, we obtain a graph of minimum degree at least $\left\lceil \frac{k+2}{2}\right\rceil$, and
	\item by contracting all edge in $X_2$, we obtain a graph of average degree at least $\tfrac{2}{3}(k+1)$.
	\end{itemize}
\end{theo}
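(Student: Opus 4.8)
The plan is to re-run the extremal \emph{lollipop} argument behind the Gupta--Kahn--Robertson bound, but to record enough of the combinatorial structure of the witnessing cycle to drive the two contractions. Recall that a lollipop is the union of a cycle (the \emph{candy}) and a path (the \emph{stick}) meeting in exactly one vertex, and that the free end of the stick is its \emph{tip}. I would consider a lollipop in $G$ whose candy $C$ has as many vertices as possible and, subject to that, whose stick is as short as possible. Since the minimum degree is at least $k$, the tip has at least $k$ neighbours, and maximality of the candy forces all of them to lie on the lollipop; the standard rotation operation, rerouting the stick through an edge from the tip to the candy, then converts each such neighbour into a new tip without shrinking the candy. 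Iterating and bookkeeping the rotations produces $k+1$ vertices of $C$ that each send at least $k$ edges into $C$, which already yields the $\frac{(k+1)(k-2)}{2}$ chords. What I want to extract in addition is the cyclic order $c_0,\dots,c_{n-1}$ of $C$ together with, for each of these $k+1$ ``rich'' vertices, the list of arcs of $C$ in which its neighbours fall; the extremal choice (longest candy, shortest stick) is what I would use to certify that these neighbourhoods are suitably \emph{spread} around $C$ rather than bunched into a few long arcs.

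For $X_1$ I would partition $C$ into $k+1$ arcs, each containing exactly one rich vertex, and contract every arc to a single super-vertex, so that the super-vertices form a cycle on $k+1$ vertices. That cycle already gives every super-vertex degree $2$; the work is to show each super-vertex gains enough chord-neighbours to reach $\lceil\frac{k+2}{2}\rceil$. A super-vertex carrying a rich vertex $s$ inherits all of the at least $k$ edges leaving $s$, and is adjacent, in the contraction, to every super-vertex meeting the neighbourhood of $s$. The target $\lceil\frac{k+2}{2}\rceil$ is exactly what one gets if, in the worst case, the neighbours of $s$ pair up at most two to an arc; establishing this ``at most two per arc'' property, equivalently that no arc between consecutive rich vertices is long enough to swallow three neighbours of $s$, is where the extremal structure of the lollipop must be invoked. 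Super-vertices built around stretches of originally low-degree vertices are the delicate case, and I would fall back on contracting along a carefully chosen matching so that each low-degree vertex is merged into a rich partner.

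For $X_2$ I would argue by a global count rather than vertex by vertex. In $H=G[V(C)]$ there are at least $n$ cycle edges and at least $\frac{(k+1)(k-2)}{2}$ chords, and contracting a set $X_2$ of $t$ cycle edges leaves $n-t$ vertices while deleting the $t$ contracted edges and turning some chords into loops or parallel edges. The average degree of the result is $\tfrac{2e'}{n-t}$, so the whole game is to choose $X_2$ so that $t$ is large while few chords are lost. I would estimate the loss either through a uniformly random contraction (contract each cycle edge independently with a well-chosen probability $p$ and compute the expected number of surviving super-vertices and surviving chords) or by contracting $C$ in consecutive blocks whose lengths are tuned to the local chord density; in both cases the constant $\tfrac{2}{3}$ should emerge as the worst ratio between surviving edges and surviving vertices once loops and parallels are discarded, and $\tfrac{2}{3}(k+1)$ would follow by optimising the single free parameter.

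The main obstacle, in both parts, is controlling what contraction destroys: a chord with both ends in one contracted arc vanishes, and several chords between the same two arcs collapse to a single edge, so a contraction that is too aggressive about reducing $n$ can annihilate the very chords it relies on. For the minimum-degree statement this is sharpened to a \emph{local} requirement, namely that every part, including those around a rich vertex with clustered neighbours or around runs of low-degree vertices, must individually clear $\lceil\frac{k+2}{2}\rceil$, and that is the step that genuinely needs the refined lollipop analysis rather than the bare Gupta--Kahn--Robertson conclusion. I expect the heart of the proof to be a structural lemma asserting that the extremal lollipop cannot leave a long chord-free or neighbour-free arc between consecutive rich vertices, since such an arc could be rerouted to lengthen the candy or shorten the stick, contradicting extremality.
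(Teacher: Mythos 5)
Your overall strategy (an extremal lollipop, rotations producing many rich vertices, then a structure-guided contraction) is the right family of ideas, but there are two genuine gaps. First, your extremal choice does not deliver what you claim. With ``largest candy, then shortest stick,'' a neighbour $v$ of the tip lying \emph{off} the lollipop contradicts nothing: appending $v$ keeps the candy and \emph{lengthens} the stick, which is perfectly consistent with your criterion, so you cannot conclude that the tip's neighbours lie on the lollipop. The paper instead takes a lollipop $(P,C)$ whose \emph{vertex set} is inclusion-wise maximal and, subject to that, whose cycle is longest; and the rotations are not performed at the stick's tip, but are P\'osa rotations of Hamiltonian paths of $G[C]$ rooted at the attachment vertex $c_1$ (the ``active'' paths). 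For an end $u$ of such a path, a neighbour on $P$ would yield a lollipop on the same vertex set with a longer cycle, and a neighbour outside the lollipop would yield a lollipop on a larger vertex set; both contradictions give $N_G(u)\subseteq V(C)$, which is what puts at least $k$ neighbours of each rich vertex \emph{inside $C$}. Your stick-tip rotations, even if repaired, only place the tip's neighbours somewhere on the lollipop, not in $C$, and the tips themselves are not vertices of $C$.

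Second, and more seriously, the property your contraction rests on --- that neighbourhoods of rich vertices cannot bunch up --- is exactly the hard part, and neither your ``at most two per arc'' claim nor your closing lemma (``no long chord-free or neighbour-free arc'') is proved or even correctly aimed: the danger is not empty arcs but arcs that swallow many neighbours of a single rich vertex, and under your arc partition the neighbours falling inside a vertex's own arc vanish outright. What the paper proves is different in kind: calling an edge of $C$ \emph{passive} when both ends are non-active, every active path traverses every passive edge, and consequently an active vertex has \emph{at most one} neighbour in any maximal passive run, necessarily an end of that run. This makes contraction safe: contracting all passive edges preserves every active vertex's degree exactly, after which the cycle decomposes into active--active edges and length-two paths through a single non-active vertex; contracting a matching that absorbs each leftover non-active vertex into an adjacent active vertex merges neighbours of any active vertex at most in pairs, giving minimum degree at least $2+\left\lceil\frac{k-2}{2}\right\rceil=\left\lceil\frac{k+2}{2}\right\rceil$. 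For $X_2$, no randomness or parameter tuning is needed: with $m$ active vertices, $n_a$ chords having both ends active and $n_b$ chords having exactly one active end, one has $2n_a+n_b\ge (k-2)m$; the two contractions above have average degree at least $2+\frac{n_a+n_b}{m}$ and $2+\frac{2n_a}{m}$ respectively, and $\max(n_a+n_b,\,2n_a)\ge\tfrac{2}{3}(2n_a+n_b)\ge\tfrac{2}{3}(k-2)m$ yields $\tfrac{2}{3}(k+1)$ for the better of the two. Your randomized or block contraction would still face the chord-destruction problem that only these passive-edge lemmas resolve, and you give no computation recovering the constant $\tfrac{2}{3}$; without that structural lemma, the proposal cannot be completed as written.
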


    As mentioned in their work, some conclusions of \cite{GUPTA198037} are tight, which can be easily seen by considering a complete graph $K_t$, that has minimum degree $k=t-1$, and that obviously contains a cycle with exactly $k+1$ vertices of degree exactly~$k$ and exactly $\frac{(k+1)(k-2)}{2}$ chords.
    
    Now notice that, in order to do the contraction operation on the edges of $C$ and obtain a dense cycle $C'$, it is not enough to count the number of chords in~$C$. Indeed, $C$ may have many parallel chords, which can be a problem to obtain the dense contracted cycle $C'$. Here we use a similar method to the one used by~\cite{GUPTA198037}, however we refine it since we need to guarantee that there are many crossing chords in $C$.

 \subsection*{Dense Cyclic minors}
 
 Motivated by the edge contractions in Theorem~\ref{th:main}, we say that a graph $H$ is a \emph{cyclic minor} of a graph $G$ if a graph isomorphic to $H$ can be obtained from a Hamiltonian subgraph of $G$ by contracting some of the edges of the Hamiltonian cycle. So Theorem~\ref{th:main} just states that a large minimum degree guarantees a graph with high minimum degree as a cyclic minor. We will prove that by the Marcus-Tardos theorem~\cite{MARCUS2004153}, this cyclic minor can be further contracted in a cyclic way to form a complete bipartite graph (with some additional edges in the partite sets). 
 
 In fact, by using the notion of $k$-linked subgraph~\cite{THOMAS2005309}, it is not very difficult to prove that a large average degree guarantees a large complete graph as a cyclic minor, so that one may define  $f(\ell)$ as the smallest integer $\delta$ such that every graph of minimum degree at least $\delta$ contains $K_\ell$ as a cyclic minor.   We can prove that $f(4)=3$,  and $6 \le f(5)\le 8$ and more generally $f(\ell)=O(\ell^2)$.  We propose the following open question.

\begin{ques}Could it be that $f(\ell)=O(\ell\sqrt{\log \ell})$, matching the bound for standard minors from~\cite{kos1982,DBLP:journals/dm/Vega83}?
\end{ques}

 \subsection*{Lollipops}

    Our method to produce a dense cycle relies on \emph{lollipops}, that are subgraphs consisting of a path and a cycle containing a unique common vertex (a more formal definition is given below).  They were first defined and used by Thomason in~\cite{MR499124}. Since this seminal paper, the so-called \emph{lollipop method} has been extensively used, mostly to prove results about Hamiltonian cycles. We might cite about fifty papers citing~\cite{MR499124}, but we just mention one that has the advantage of being recent, more related to our topic and containing a short survey and nice results~\cite{DBLP:journals/jct/Thomassen18a}. We emphasize that in~\cite{GUPTA198037}, even though not with this terminology, the lollipop method was used to prove the existence of dense substructures. Here we use push this method further to obtain dense contracted cycles.
    
    \subsection*{Outline of the paper}

        In Section~\ref{sec:proofs}, we formally define optimal lollipops and prove by a sequence of lemmas that the cycle of such a lollipop satisfies all the properties given in Theorem~\ref{th:main} (namely, the theorem follows directly from Lemma~\ref{l:hamiltonian} and Lemma~\ref{l:contract}). In Section~\ref{sec:cyclic}, we compute $f$ for some values and prove the claims about the application of the Marcus-Tardos theorem~\cite{MARCUS2004153} and $k$-linked graphs.  In Section~\ref{sec:open}, we propose several conclusive remarks and open questions. 

    \subsection*{Definitions and notations}

    We mostly use standard terminology, see~\cite{diestel:graph}. It is convenient here to view a \emph{path} in a graph $G$ as a sequence of distinct vertices $P= p_1 \dots p_k$ such that for all $i\in \{1, \dots k-1\}$, $p_ip_{i+1} \in E(G)$.  Each edge $p_ip_{i+1}$ is an \emph{edge of~$P$} and any other edge between vertices of $P$ is a \emph{chord of $P$}. By $E(P)$ we denote the set of edges of $P$. We use a similar terminology for a cycle, that we view as a sequence $C = c_1 \dots c_kc_1$ of distinct vertices, where $k\geq 3$, and such that for all $i\in \{1, \dots k\}$, $c_ic_{i+1} \in E(G)$, with subscript taken modulo~$k$. The \emph{length} of a path (or cycle) is its number of edges. 
    When $P$ is a path and $a$ and $b$ are vertices of $P$, we denote by $aPb$ the subpath of $P$ from $a$ to $b$. 
    
    We use the notation $V(X)$ to denote the set of vertices of any kind of object $X$ that has vertices (so $V(G)$ for a graph $G$, $V(P)$ for a path $P$ and so on). We denote by $N(v)$ the set of neighbors of $v$ in a graph $G$ and we use the notation $N_X(v)$ for $N(v) \cap V(X)$ (again, $X$ can be a graph, a cycle and so on).  We denote by $d(v)$ the degree of a vertex $v$ (that is $|N(v)|$), and use the notation $d_X(v)$ for $|N_X(v)|$.

\section{Lollipops and dense cycles}
\label{sec:proofs}

A \emph{lollipop $L$ with path $P$ and cycle $C$} in a graph $G$ is a pair $(P, C)$ where $P = p_1\dots p_s$ ($s\geq 1$) is a path of $G$, $C = c_1\dots c_t c_1$ ($t\geq 3$) is a cycle of $G$, $p_s=c_1$ and $V(P) \cap V(C) = \{c_1\}$; see Fig.~\ref{fig_lollipop} for an illustration.

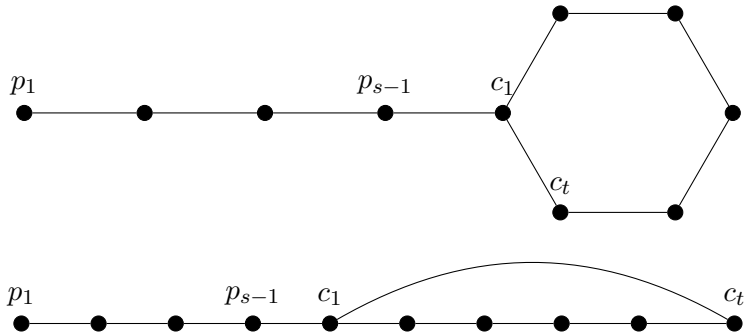
\begin{figure}[h!]
    \centering

\begin{tikzpicture}[scale=0.3]
  \tikzset{vertex/.style={circle, minimum size=0.1cm, fill=black, draw, inner sep=1pt}};

  \node[vertex, label={$p_1$}] (a) at (1.8bp,64.321bp) {};
  \node[vertex] (b) at (77.4bp,64.321bp) {};
  \node[vertex] (c) at (153.0bp,64.321bp) {};
  \node[vertex, label={$p_{s-1}$}] (d) at (228.6bp,64.321bp) {};
  \node[vertex,label={$c_1$}] (v) at (302.4bp,64.321bp) {};
  \node[vertex] (e) at (410.69bp,126.84bp) {};
  \node[vertex] (f) at (446.79bp,64.321bp) {};
  \node[vertex] (g) at (410.69bp,1.8bp) {};
  \node[vertex, label={$c_t$}] (w_t) at (338.5bp,1.8bp) {};
  \node[vertex, label={}] (w_1) at (338.5bp,126.84bp) {};

  \draw [] (a) ..controls (15.021bp,64.321bp) and (64.633bp,64.321bp)  .. (b);
  \draw [] (b) ..controls (90.621bp,64.321bp) and (140.23bp,64.321bp)  .. (c);
  \draw [] (c) ..controls (166.22bp,64.321bp) and (215.83bp,64.321bp)  .. (d);
  \draw [] (d) ..controls (241.41bp,64.321bp) and (289.09bp,64.321bp)  .. (v);
  \draw [] (e) ..controls (417.0bp,115.91bp) and (440.69bp,74.879bp)  .. (f);
  \draw [] (f) ..controls (440.47bp,53.387bp) and (416.78bp,12.358bp)  .. (g);
  \draw [] (g) ..controls (398.16bp,1.8bp) and (351.52bp,1.8bp)  .. (w_t);
  \draw [] (v) ..controls (308.71bp,75.254bp) and (332.4bp,116.28bp)  .. (w_1);
  \draw [] (w_1) ..controls (351.03bp,126.84bp) and (397.67bp,126.84bp)  .. (e);
  \draw [] (w_t) ..controls (332.18bp,12.733bp) and (308.5bp,53.762bp)  .. (v);

    \node[vertex, label={$p_1$}] (1) at (0,-2.4) {};
  \node[vertex] (2) at (1.71,-2.4) {};
  \node[vertex] (3) at (3.42,-2.4) {};
  \node[vertex, label={$p_{s-1}$}] (4) at (5.13,-2.4) {};
  \node[vertex,label={$c_1$}] (5) at (6.84,-2.4) {};
  \node[vertex] (6) at (8.55,-2.4) {};
  \node[vertex] (7) at (10.26,-2.4) {};
  \node[vertex] (8) at (11.97,-2.4) {};
  \node[vertex, label={}] (9) at (13.68,-2.4) {};
  \node[vertex, label={$c_t$}] (10) at (15.8,-2.4) {};

\draw (1)--(2)--(3)--(4)--(5)--(6)--(7)--(8)--(9)--(10);
\draw (10)edge[bend right](5);

\end{tikzpicture}

    \caption{Two representations of a lollipop $L$.}
    \label{fig_lollipop}

\end{figure}

A lollipop $L=(P, C)$ in $G$ is \emph{optimal} if: 
\begin{itemize}
    \item $L$ is vertex-wise maximal (that is no lollipop $L'$ of $G$ is such that $V(L) \subsetneq V(L'))$ and 
    \item among all lollipops on $V(L)$, $L$ has a cycle of maximum length (that is no lollipop $L' = (P', C')$ where $V(L')=V(L)$ is such that the length of $C'$ is greater than the length of $C$).  
\end{itemize}

\begin{lemma}
 \label{l:contain}
 Let $G$ be graph with minimum degree at least~2.  For every path $Q$ of $G$, there exists an (optimal) lollipop  that contains all vertices of $Q$. 
\end{lemma}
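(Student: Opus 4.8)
The plan is to split the argument into two parts: first produce \emph{some} lollipop whose vertex set contains $V(Q)$, and then upgrade it to an optimal one without losing the containment. For the first part I would argue by extremality. Among all paths $R$ of $G$ with $V(Q)\subseteq V(R)$, choose one maximizing the number of vertices $|V(R)|$. Such a path exists since $Q$ itself belongs to this family and $G$ is finite. Note that $R$ need not contain $Q$ as a subpath; only the vertex containment matters.

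Then I look at an endpoint. Write $R=r_1\dots r_n$ and consider $r_n$. If $r_n$ had a neighbor outside $V(R)$, appending it would give a strictly longer path still containing $V(Q)$, contradicting the choice of $R$; hence every neighbor of $r_n$ lies in $V(R)$. Since $d(r_n)\geq 2$ and $r_n$ is not adjacent to itself, $r_n$ has a neighbor $r_j$ distinct from $r_{n-1}$, so $j\leq n-2$ (and in particular $n\geq 3$). Now the path $P=r_1\dots r_j$ and the cycle $C=r_j r_{j+1}\dots r_n r_j$, of length $n-j+1\geq 3$, meet exactly in $r_j=p_s=c_1$, so $(P,C)$ is a lollipop, and $V(P)\cup V(C)=V(R)\supseteq V(Q)$.

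For the second part, among all lollipops $L$ with $V(Q)\subseteq V(L)$ (a nonempty family by the above) I would pick one whose vertex set is maximal under inclusion. I claim it is already vertex-wise maximal among all lollipops of $G$: any lollipop $L'$ with $V(L)\subsetneq V(L')$ would also satisfy $V(Q)\subseteq V(L')$, contradicting maximality within the family. Finally, replacing $L$ by a lollipop on the same vertex set $V(L)$ whose cycle has maximum length yields an optimal lollipop still containing $V(Q)$. The only step that uses a graph hypothesis is the endpoint analysis, which is precisely where minimum degree at least $2$ is essential, letting us close a cycle instead of being forced to extend the path forever; the remainder is purely order-theoretic, so I do not expect a genuine obstacle here.
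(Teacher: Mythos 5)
Your proof is correct and follows essentially the same route as the paper's: choose a path that is extremal among paths whose vertex set contains $V(Q)$, use the minimum-degree-$2$ hypothesis at its endpoint to close a cycle and form a lollipop, then maximize the cycle length on the resulting vertex set. The only cosmetic difference is that you secure vertex-wise maximality by a second maximization over lollipops containing $V(Q)$, whereas the paper obtains it implicitly from the maximality of the chosen path (every lollipop is spanned by a path, so a strictly larger lollipop would contradict that maximality); both arguments are sound.
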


\begin{proof}
    Let $Q'= a\dots b$ be a vertex-inclusion-wise maximal path containing all vertices of~$Q$.  Since $d_G(b)\geq 2$ and $Q'$ is vertex-inclusion-wise maximal, $b$ is adjacent to at least two vertices of $Q'$, thus forming a lollipop.  Hence, if we take among all the lollipops on $V(Q')$ one that has a cycle with maximum length, then we obtain an optimal lollipop.
\end{proof}

\vspace{0.4cm}

From here on, we assume that $G$ is a graph with minimum degree~$k\geq 2$ and 
$L = (P, C)$ is an optimal lollipop of $G$ with notation as above. 

\begin{lemma}\label{lem:activeneighborhood}
 If $G[C]$ contains a Hamiltonian path with ends $c_1$ and $u$, then $N_G(u) \subseteq V(C)$. 
\end{lemma}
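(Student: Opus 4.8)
The plan is to argue by contradiction using the optimality of the lollipop $L=(P,C)$. Suppose for contradiction that $u$ has a neighbor $w \notin V(C)$, and let $H$ be a Hamiltonian path of $G[C]$ with ends $c_1$ and $u$. The key idea is that such a Hamiltonian path gives us complete freedom to reorganize the cyclic part of the lollipop: since $H$ visits every vertex of $C$ and ends at $c_1=p_s$, we can glue $P$ onto $H$ at $c_1$ to view the entire lollipop as a single path $P' = p_1 \dots p_{s-1} c_1 H u$ running from $p_1$ to $u$, with $u$ now playing the role of a free endpoint.

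First I would establish that $w \notin V(P) \cup V(C)$, i.e. that $w$ lies entirely outside the lollipop. Indeed $w \notin V(C)$ by assumption, so I must rule out $w \in V(P) \setminus \{c_1\}$. If $w = p_i$ for some $i$, then the edge $uw = up_i$ together with the path $P'$ would create a cycle through $u$, and I expect to derive from this a lollipop on the same vertex set but with a strictly longer cycle (since such a cycle would incorporate all of $C$ plus a stretch of $P$), contradicting the second optimality condition — or alternatively it directly contradicts maximality. I would need to check the positions carefully, but the point is that the chord from $u$ back into the path yields a cycle strictly longer than $C$ on a subset of $V(L)$.

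Second, once $w$ is shown to be outside $V(L)$, the path $P'' = p_1 \dots p_{s-1} c_1 H u w$ obtained by appending $w$ is a path of $G$ strictly containing all vertices of $V(L)$, since $w$ is a new vertex. By Lemma~\ref{l:contain} (or directly by its proof), this path extends to a lollipop whose vertex set properly contains $V(L)$, contradicting the vertex-wise maximality of $L$. This is the cleaner of the two contradictions and is essentially the engine of the whole argument: a Hamiltonian path of $G[C]$ ending at $c_1$ lets us treat $u$ as the live end of a path that already absorbs all of $L$, so any neighbor of $u$ outside $C$ is an illegal extension.

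The main obstacle I anticipate is the first step — ruling out neighbors of $u$ inside $P \setminus \{c_1\}$ — because there the contradiction comes from the \emph{maximum cycle length} clause rather than from vertex maximality, and one must verify that rerouting through the Hamiltonian path $H$ and the chord $up_i$ genuinely produces a lollipop on $V(L)$ whose cycle is longer than $C$. Concretely, I would form the cycle $c_1 H u p_i \dots p_{s-1} c_1$ (closing up through the tail of $P$) and pair it with the remaining initial segment $p_1 \dots p_i$ of $P$ as the new stick; I expect this new cycle to have length at least $t+1 > t$, contradicting optimality. Handling the boundary cases (e.g. $w=p_{s-1}$, or $u=c_1$ so that $H$ is degenerate) will require a little care, but no deep idea beyond bookkeeping.
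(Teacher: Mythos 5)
Your proposal is correct and follows essentially the same argument as the paper: the case $w\in V(P)$ is handled by forming the lollipop with stick $p_1\dots p_i$ and cycle $p_i\dots p_{s-1}c_1 H u p_i$, whose cycle is longer than $C$ (contradicting optimality), and the case $w\notin V(L)$ is handled by extending $p_1\dots p_{s-1}c_1Huw$ to a larger lollipop via Lemma~\ref{l:contain} (contradicting vertex-wise maximality). The boundary cases you worry about are harmless ($u\neq c_1$ since $H$ has distinct ends, and $w\neq c_1=p_s$ since $w\notin V(C)$), so the bookkeeping goes through exactly as you describe.
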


\begin{proof}
    Suppose for a contradiction that $Q = c_1\dots u$ is an Hamiltonian path of $G[C]$, $uv\in E(G)$ and $v\notin V(C)$. If $v\in V(P)$, then the lollipop $L'=(P', C')$ where $P'=p_1 P v$ and $C'= v P c_1 Q u v$ is a lollipop on $V(L)$ that has a cycle longer than $C$, a contradiction to the optimality of $L$. So $v\notin V(L)$. Then $L$ is not vertex-inclusion-wise maximal since by Lemma~\ref{l:contain} the path $P'=p_1Pc_1Quv$ is contained in a lollipop larger than $L$, a contradiction. Hence, $N_G(u) \subseteq V(C)$.
\end{proof}

\vspace{0.4cm}

Let us define recursively sets $\mathcal S_1, \mathcal S_2, \dots$ Each $\mathcal S_i$ is a set of Hamiltonian paths of $G[C]$ starting at $c_1$. 

\begin{itemize}
    \item    
$\mathcal S_1 = \{c_1 c_2 \dots c_t, c_1c_t c_{t-1} \dots c_2\}$.
\item 
For all $i\geq 1$, let us define $\mathcal S_{i+1}$ from $\mathcal S_{i}$.  For all paths $Q = c_1\dots u \in \mathcal S_i$ and all vertices $v$ such that $uv$ is a chord of $Q$, let $w$ be the neighbor of $v$ in $vQu$.  If $vw$ is an edge of $C$, then add the path $c_1QvuQw$ to $\mathcal S_{i+1}$ (see Fig.~\ref{fig_def_Si}). 
\end{itemize}

\begin{figure}[ht]
    \centering
    
    \begin{tikzpicture}[scale=0.3]

  \tikzset{vertex/.style={circle, minimum size=0.1cm, fill=black, draw, inner sep=1pt}};    

  \node[vertex, label={$c_1$}] (1'') at (0,3) {};
  \node[vertex] (2'') at (2,3) {};
  \node[vertex] (3'') at (4,3) {};
  \node[vertex, label={}] (4'') at (6,3) {};
  \node[vertex,label={$v$}] (5'') at (8,3) {};
  \node[vertex, label={$w$}] (6'') at (10,3) {};
  \node[vertex] (7'') at (12,3) {};
  \node[vertex] (8'') at (14,3) {};
  \node[vertex, label={$u$}] (9'') at (16,3) {};
  \node[vertex, color={white}, label={$\in \mathcal S_i$}] (cap1) at (18,2.7){};

\draw (1'')--(2'')--(3'')--(4'')--(5'')--(6'')--(7'')--(8'')--(9'');

  \node[vertex, label={$c_1$}] (1') at (0,-1) {};
  \node[vertex] (2') at (2,-1) {};
  \node[vertex] (3') at (4,-1) {};
  \node[vertex] (4') at (6,-1) {};
  \node[vertex,label={$v$}] (5') at (8,-1) {};
  \node[vertex, label={$w$}] (6') at (10,-1) {};
  \node[vertex] (7') at (12,-1) {};
  \node[vertex] (8') at (14,-1) {};
  \node[vertex, label={$u$}] (9') at (16,-1) {};

\draw (1')--(2')--(3')--(4')--(5')--(6')--(7')--(8')--(9');
\draw (9')edge[bend right](5');

  \node[vertex, label={$c_1$}] (1) at (0,-5) {};
  \node[vertex] (2) at (2,-5) {};
  \node[vertex] (3) at (4,-5) {};
  \node[vertex] (4) at (6,-5) {};
  \node[vertex,label={$v$}] (5) at (8,-5) {};
  \node[vertex, label={$w$}] (6) at (10,-5) {};
  \node[vertex] (7) at (12,-5) {};
  \node[vertex] (8) at (14,-5) {};
  \node[vertex, label={$u$}] (9) at (16,-5) {};
  \node[vertex, color={white}, label={$\in \mathcal S_{i+1}$}] (cap1) at (18,-5.3){};

\draw (1)--(2)--(3)--(4)--(5);
\draw (6)--(7)--(8)--(9);
\draw (9)edge[bend right](5);

    \end{tikzpicture}
    
  \caption{Construction of an element of $\mathcal S_{i+1}$ from an element of $\mathcal S_i$.}
    \label{fig_def_Si}
    
\end{figure}
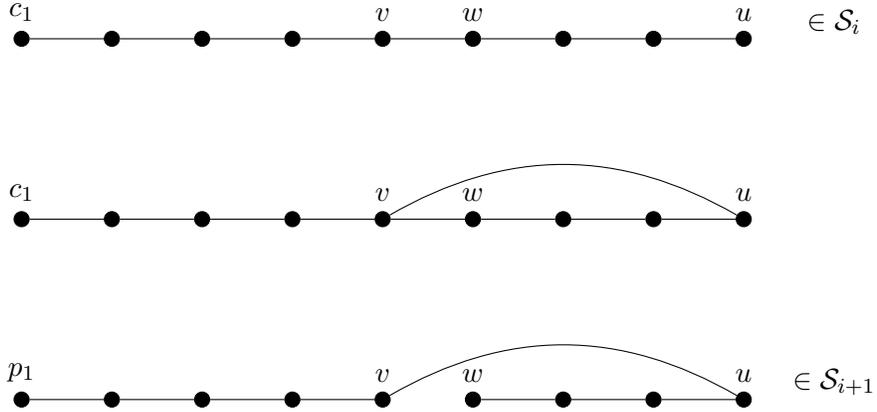

A path $Q$ of $G$ is said to be \emph{active} if $Q\in \mathcal{S}_i$. By extension, we also call \emph{active} every vertex $u\neq c_1$ that is an end of an active path. Note that $c_1$ is not active.  The following lemma is not needed, but we keep it since it illustrates the notion. 

\begin{lemma}
    For all $i\geq 1$, $\mathcal S_i \subseteq \mathcal S_{i+1}$.
\end{lemma}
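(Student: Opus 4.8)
The plan is to prove the inclusion by induction on $i$, reading the recursion so that $\mathcal{S}_{i+1}$ consists exactly of the rotated paths produced from the members of $\mathcal{S}_i$ (a path is placed in $\mathcal{S}_{i+1}$ precisely when it arises from some $Q \in \mathcal{S}_i$ and some chord $uv$ of $Q$ for which the neighbor $w$ of $v$ in $vQu$ satisfies $vw \in E(C)$). The structural fact I will lean on is that whether such a rotation is \emph{admissible} — that is, that $uv$ is a chord of $Q$ and that $vw$ is an edge of $C$ — depends only on the path $Q$ and the chosen chord, not on the index $i$ of the set $\mathcal{S}_i$ from which $Q$ is drawn. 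This index-independence is exactly what forces the recursion to be monotone.

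For the base case $i=1$ I would verify $\mathcal{S}_1 \subseteq \mathcal{S}_2$ directly. Write $Q_1 = c_1 c_2 \cdots c_t$ and $Q_2 = c_1 c_t c_{t-1} \cdots c_2$. Starting from $Q_1$, with end $u = c_t$, take $v = c_1$: then $uv = c_t c_1$ is a chord of $Q_1$ (its two ends are joined by a cycle edge), the vertex $w$ following $c_1$ towards $c_t$ is $c_2$, and $vw = c_1 c_2 \in E(C)$, so the rotation is admissible and produces $c_1 c_t c_{t-1} \cdots c_2 = Q_2$. The symmetric computation applied to $Q_2$ (using the chord $c_2 c_1$ and the cycle edge $c_1 c_t$) recovers $Q_1$. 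Hence both elements of $\mathcal{S}_1$ lie in $\mathcal{S}_2$. This is the genuinely illustrative part, as it shows that reversing the orientation of the cycle is itself realized by a single rotation.

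For the inductive step, assume $\mathcal{S}_{i-1} \subseteq \mathcal{S}_i$ with $i \geq 2$, and take an arbitrary $Q \in \mathcal{S}_i$. By the definition of $\mathcal{S}_i$, the path $Q$ was produced from some parent $Q' \in \mathcal{S}_{i-1}$ by an admissible rotation. The induction hypothesis gives $Q' \in \mathcal{S}_i$, and since admissibility of that rotation is intrinsic to $Q'$ and its chord, re-running the very same rotation while forming $\mathcal{S}_{i+1}$ from $\mathcal{S}_i$ again yields $Q$; thus $Q \in \mathcal{S}_{i+1}$. As $Q$ was arbitrary, $\mathcal{S}_i \subseteq \mathcal{S}_{i+1}$, completing the induction.

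The only real subtlety — and the step I would be most careful with — is the base case: one must observe that the cycle edge $c_1 c_t$ serves as a chord of the Hamiltonian path $c_1 c_2 \cdots c_t$ because its two ends are adjacent on $C$, and one must check the condition $vw \in E(C)$ there. Once this is secured, the inductive step is a routine monotonicity argument, precisely because nothing in the admissibility conditions refers to the index $i$.
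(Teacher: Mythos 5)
Your proof is correct and follows essentially the same route as the paper's: the base case realizes $c_1c_tc_{t-1}\cdots c_2$ from $c_1c_2\cdots c_t$ (and vice versa) via the rotation using the cycle edge $c_1c_t$ as a chord, and the inductive step re-applies the parent path's rotation after invoking the induction hypothesis $\mathcal S_{i-1}\subseteq\mathcal S_i$. The only cosmetic difference is that you check both elements of $\mathcal S_1$ explicitly where the paper appeals to symmetry.
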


\begin{proof}
    We make an induction on $i$. 
    For $i=1$, due to symmetry, is suffices to show that $c_1c_t c_{t-1} \dots c_2 \in\mathcal S_2$.  By definition, $Q = c_1 c_2 \dots c_t\in\mathcal S_1$. Since $c_1c_t$ is a chord of $Q$, it is possible to take the Hamiltonian path $Q'=c_1c_t c_{t-1} \dots c_2$, so $Q' \in \mathcal S_2$. Hence, $\mathcal S_1\subseteq\mathcal S_2$.

    For $i>1$, let $Q \in\mathcal S_i$, where $Q=c_1\ldots u$. So there is a path $Q' = c_1 \dots u' \in\mathcal S_{i-1}$ such that $vu'$ is a chord of $Q'$ and $u$ is the neighbor of $v$ in $vQ'u'$. Moreover $vu$ is an edge of $C$.  By the induction hypothesis, since $Q'\in\mathcal S_{i-1}$, we also have $Q'\in\mathcal \mathcal S_i$. By considering this path $Q'$, the definition of $S_{i+1}$ then implies that $Q\in \mathcal S_{i+1}$.
\end{proof}

As mentioned in the introduction, Lemma~\ref{lem:kactive} and Lemma~\ref{l:hamiltonian} where already proved in \cite{GUPTA198037}; however, the arguments used are not enough to provide the structure wanted in this paper. Specifically being the end an Hamiltonian path is enough to guarantee a high degree, but not enough to ensure the edges contractions used later. Hence, we have to keep the proofs of the next two lemmas, even tough they look similar to the ones in \cite{GUPTA198037}.

\begin{lemma}\label{lem:kactive}
    $C$ contains at least $k$ active vertices. Moreover, if $d_C(c_1) < k$, then $C$ contains at least $k+1$ active vertices.   
\end{lemma}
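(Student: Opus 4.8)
The plan is to combine two ingredients: a degree lower bound valid for every active vertex, and a single rotation step applied to the two paths of $\mathcal S_1$, which already produces enough distinct active vertices. Throughout I write $A$ for the set of all active vertices and $\sigma$ for the cyclic-successor map on $C$ (so $\sigma(c_i)=c_{i+1}$, indices mod $t$).

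First I would record that every active vertex has large degree inside $C$. Indeed, if $u\neq c_1$ is the end of an active path $Q$, then $Q$ is a Hamiltonian path of $G[C]$ with ends $c_1$ and $u$, so \cref{lem:activeneighborhood} gives $N_G(u)\subseteq V(C)$ and hence $d_C(u)=d_G(u)\ge k$. I would then show that every vertex of $\sigma(N_C(c_t))$ is active. This is exactly the rotation rule defining $\mathcal S_2$ applied to $Q_0=c_1c_2\cdots c_t\in\mathcal S_1$: for a neighbour $c_j\in N_C(c_t)$ with $j\le t-2$, the chord $c_tc_j$ together with the cycle edge $c_jc_{j+1}$ produces the path $c_1c_2\cdots c_j\,c_t\,c_{t-1}\cdots c_{j+1}\in\mathcal S_2$ with end $\sigma(c_j)$, while the path-predecessor $c_{t-1}$ accounts for $c_t=\sigma(c_{t-1})$ itself. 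Since $\sigma$ is injective, $|\sigma(N_C(c_t))|=d_C(c_t)\ge k$, and $c_1=\sigma(c_t)\notin\sigma(N_C(c_t))$ because $c_t\notin N_C(c_t)$. This already yields at least $k$ active vertices, none of them equal to $c_1$, proving the first statement.

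For the refinement I would first dispose of the easy case: if $d_C(c_t)\ge k+1$ then the set $\sigma(N_C(c_t))$ alone has at least $k+1$ elements, so I may assume $d_C(c_t)=k$. Applying the same rotation to the reversed path $c_1c_tc_{t-1}\cdots c_2\in\mathcal S_1$ (which plays the role of $Q_0$ with the opposite orientation) shows symmetrically that $\sigma^{-1}(N_C(c_2))\subseteq A$, again avoiding $c_1$. The goal is then to exhibit an active vertex outside $\sigma(N_C(c_t))$; equivalently, to rule out that $\sigma^{-1}(N_C(c_2))=\sigma(N_C(c_t))$ holds together with $d_C(c_2)=d_C(c_t)=k$. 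Such an equality forces $N_C(c_2)=\sigma^2(N_C(c_t))$, that is, the two neighbourhoods become exact cyclic shifts of one another, a rigid, circulant-like configuration; the hypothesis $d_C(c_1)<k$ is precisely what should be incompatible with this rigidity. Concretely, I would track a second rotation step starting from the extreme chord $c_tc_m$ with $m$ maximal subject to $m\le t-2$, and argue that it lands an end in the arc left uncovered near $c_1$, the low degree of $c_1$ guaranteeing that this end is genuinely new.

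The main obstacle is exactly this last step. Unlike $c_t$ and $c_2$, whose active paths are literally the cycle $C$ and therefore convert cyclic successors into path-successors for free, a general active vertex is reached by a rotated path, so its rotations no longer coincide with $\sigma$; and because every active path is anchored at the fixed vertex $c_1$, the behaviour of rotations in the arc around $c_1$ cannot be deduced by symmetry from the behaviour at $c_t$. Pinning down one extra active vertex in that arc, and verifying that it is distinct from the $k$ vertices of $\sigma(N_C(c_t))$ already found, is the delicate part of the argument, and it is here that the assumption $d_C(c_1)<k$ must genuinely be used.
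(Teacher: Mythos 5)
Your first paragraph is correct and is essentially the paper's own argument for the first assertion, in different notation: $c_t$ is active, Lemma~\ref{lem:activeneighborhood} gives $d_C(c_t)\ge k$, and one round of rotations applied to $c_1c_2\cdots c_t$ makes every vertex of $\sigma(N_C(c_t))$ active, giving $k$ active vertices distinct from $c_1$.

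The second assertion, however, is where your proposal has a genuine gap, and you flag it yourself: you never actually produce the $(k+1)$-st active vertex. Moreover, the reduction you set up points in an unpromising direction. Ruling out the coincidence $\sigma^{-1}(N_C(c_2))=\sigma(N_C(c_t))$ is not what is needed: the lemma does not claim that this rigid configuration is impossible, only that there are $k+1$ active vertices, and nothing obtainable from rotating the two paths of $\mathcal S_1$ (i.e.\ from $\mathcal S_2$ alone) suffices for that. The extra vertices must come from a \emph{second} round of rotations, i.e.\ from $\mathcal S_3$, and your ``extreme chord $c_tc_m$'' plan gives no control over where those rotations land, because maximality of $m$ says nothing about adjacency to $c_1$.

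The missing idea is that $d_C(c_1)<k$ is used to choose \emph{which} active vertex to rotate from, not to create a contradiction near $c_1$. Among the $k$ active vertices already found, $c_2$ and $c_t$ are adjacent to $c_1$ (cycle edges); since $d_C(c_1)<k$, some active vertex $u=c_{i_j+1}$ in your set $\sigma(N_C(c_t))$ is \emph{not} adjacent to $c_1$. Lemma~\ref{lem:activeneighborhood} then gives $N_G(u)\subseteq V(C)\setminus\{c_1\}$ and $|N_G(u)|\ge k$. Now rotate the $\mathcal S_2$-path $R=c_1c_2\cdots c_{i_j}c_tc_{t-1}\cdots c_{i_j+1}$ ending at $u$: a neighbour $c_{i'}$ of $u$ with $2\le i'\le i_j-1$ yields the active vertex $c_{i'+1}$, while a neighbour $c_{i''}$ with $i_j+3\le i''\le t$ yields the active vertex $c_{i''-1}$. (The path $R$ traverses the arc $c_2\cdots c_{i_j}$ forwards and the arc $c_t\cdots c_{i_j+1}$ backwards, so rotation produces successors on one side and predecessors on the other --- exactly the phenomenon your $\sigma$-based bookkeeping cannot express, and which makes the count work.) The at least $k-2$ neighbours of $u$ other than its two cycle-neighbours $c_{i_j},c_{i_j+2}$ thus yield at least $k-2$ active vertices, whose indices lie in $[3,i_j]\cup[i_j+2,t-1]$; together with $c_2$, $u$, and $c_t$ these form at least $k+1$ \emph{distinct} active vertices. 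Note there is no need to compare them with the $k$ vertices found earlier: one simply counts this second collection, which is internally distinct.
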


\begin{proof}
    By definition $Q =c_1c_2 c_3 \dots c_t \in \mathcal S_1$, and thus the vertex $c_t$ is active and $N_G(c_t)\subseteq V(C)$ by~Lemma~\ref{lem:activeneighborhood}. So, there exist integers $2 \leq i_1 < \dots < i_{k-2} \leq t-2$ such that  $c_{i_j}\in N_G(c_t)$ for $j=1, \dots, k-2$. Since $c_tc_{i_j}$ is a chord of $Q$ and $c_{i_j}c_{i_j+1}$ is an edge of $C$, we have $Q_j = c_1Qc_{i_j}c_tQc_{i_j+1} \in\mathcal S_2$, so $c_{i_j+1}$ is active.  Now, $c_2, c_{i_1+1}, \dots, c_{i_{k-2}+1}, c_t$ are $k$ distinct active vertices, proving the first conclusion. 
     
   Now assume that $d_C(c_1)<k$. So, $c_1$ cannot be adjacent to all vertices in $\{c_2, c_{i_1+1}, \dots, c_{i_{k-2}+1}, c_t\}$. Hence, there exists an integer $1\leq j \leq k-2$ such that $c_1c_{{i_j}+1}\notin E(G)$, $c_tc_{i_j}\in E(G)$, $c_{{i_j}+1}$ is active and $R\in \mathcal S_2$ where $R=c_1Qc_{i_j}c_tQc_{{i_j}+1}$. Since $c_{{i_j}+1}$ is active and $c_1c_{{i_j}+1}\notin E(G)$, by~Lemma~\ref{lem:activeneighborhood}, $N_G(c_{{i_j}+1})\subseteq V(C)\setminus\{c_1\}$. Among the neighbors of $c_{{i_j}+1}$, some are in $Q' = c_2 Q c_{{i_j}-1}$, and some are in $Q''=c_{{i_j}+3} Qc_t$.  
   So, there exist integers $k' = |N_G(c_{{i_j}+1})\cap V(Q')|$ and $k'' = |N_G(c_{{i_j}+1})\cap V(Q'')|$, $2 \leq i'_1 < \dots < i'_{k'} \leq {i_j}-1$ and ${i_j}+3 \leq i''_1 < \dots < i''_{k''} \leq t$ such that $c_{i'_h} \in N_{Q'}(c_{{i_j}+1})$ for $h=1, \dots, k'$ and $c_{i''_h} \in N_{Q''}(c_{{i_j}+1})$ for $h=1, \dots, k''$.  Note that $k'+k'' \ge k-2$. 

\begin{figure}[ht]
\centering

\begin{tikzpicture}[scale=0.55]
  \tikzset{vertex/.style={circle, minimum size=0.2cm, fill=black, draw, inner sep=1pt}};

  \node[vertex, label={$p_1$}] (a) at (1.8bp,116.23bp) {};
  \node[vertex] (b) at (77.4bp,116.23bp) {};
  \node[vertex] (c) at (153.0bp,116.23bp) {};
  \node[vertex, label={$p_{s-1}$}] (d) at (228.6bp,116.23bp) {};
  \node[vertex, label={$c_1$}] (v) at (302.4bp,116.23bp) {};
  \node[vertex, label={$c_{i'_h}$}] (e) at (385.54bp,230.66bp) {};
  \node[vertex, label={$c_{i'_h+1}$}] (f) at (459.9bp,230.66bp) {};
  \node[vertex] (g) at (520.06bp,186.96bp) {};
  \node[vertex, label=right:{$c_{i_j}$}] (1) at (543.04bp,116.23bp) {};
  \node[vertex, label=below:{$c_{i_j+1}$}] (2) at (520.06bp,45.509bp) {};
  \node[vertex] (3) at (459.9bp,1.8bp) {};
  \node[vertex] (4) at (385.54bp,1.8bp) {};
  \node[vertex, label=left:{$c_t$}] (w_t) at (325.38bp,45.509bp) {};
  \node[vertex, label={$c_2$}] (w_1) at (325.38bp,186.96bp) {};

  \draw [] (a) ..controls (15.021bp,116.23bp) and (64.633bp,116.23bp)  .. (b);
  \draw [] (b) ..controls (90.621bp,116.23bp) and (140.23bp,116.23bp)  .. (c);
  \draw [] (c) ..controls (166.22bp,116.23bp) and (215.83bp,116.23bp)  .. (d);
  \draw [] (d) ..controls (241.41bp,116.23bp) and (289.09bp,116.23bp)  .. (v);
  \draw [] (e) ..controls (398.45bp,230.66bp) and (446.49bp,230.66bp)  .. (f);
  \draw [] (f) ..controls (470.42bp,223.02bp) and (509.9bp,194.34bp)  .. (g);
  \draw [] (g) ..controls (524.08bp,174.59bp) and (539.16bp,128.18bp)  .. (1);
  \draw [] (1) ..controls (539.02bp,103.86bp) and (523.94bp,57.453bp)  .. (2);
  \draw [] (2) ..controls (509.54bp,37.865bp) and (470.06bp,9.1815bp)  .. (3);
  \draw [] (3) ..controls (446.99bp,1.8bp) and (398.95bp,1.8bp)  .. (4);
  \draw [] (4) ..controls (375.02bp,9.4437bp) and (335.54bp,38.128bp)  .. (w_t);
  \draw [] (v) ..controls (306.42bp,128.6bp) and (321.5bp,175.01bp)  .. (w_1);
  \draw [] (w_1) ..controls (335.9bp,194.6bp) and (375.38bp,223.28bp)  .. (e);
  \draw [] (w_t) ..controls (321.36bp,57.877bp) and (306.28bp,104.29bp)  .. (v);
  \draw[]{} (w_t) -- (1);
  \draw[]{}(2) -- (e);
  \draw[dotted]{} (v) -- (2);
\end{tikzpicture}

\caption{A situation from the proof of \cref{lem:kactive}.}
\label{fig_biglollipop1}

\end{figure}
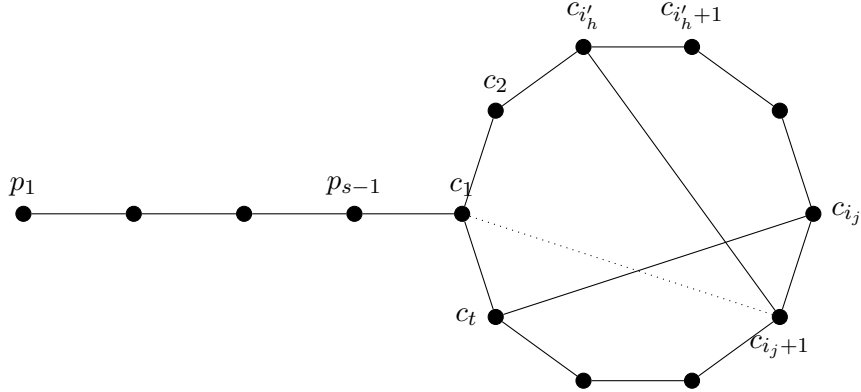

   For all $h = 1, \dots, k'$, the path $$R'_h = c_1 R c_{i'_h} c_{i_j+1} R c_{i'_h+1} =  c_1 Q c_{i'_h} c_{{i_j}+1} Q c_t c_{i_j} Q c_{i'_h+1}$$ is in $\mathcal S_3$ and shows that $c_{i'_h+1}$ is active (see~Fig.~\ref{fig_biglollipop1}).
   
   For all $h=1, \dots, k''$, the path $$R''_h = c_1 R  c_{i''_h} c_{{i_j}+1} R c_{i''_h-1} = c_1 Q c_{i_j} c_t Q c_{i''_h} c_{{i_j}+1} Q c_{i''_h-1}$$ is in $\mathcal S_3$ and shows that $c_{i''_h-1}$ is active (see~Fig.~\ref{fig_biglollipop2}).
   
   \begin{figure}[ht]
\centering

\begin{tikzpicture}[scale=0.55]
  \tikzset{vertex/.style={circle, minimum size=0.2cm, fill=black, draw, inner sep=1pt}};

  \node[vertex, label={$p_1$}] (a) at (1.8bp,116.23bp) {};
  \node[vertex] (b) at (77.4bp,116.23bp) {};
  \node[vertex] (c) at (153.0bp,116.23bp) {};
  \node[vertex, label={$p_{s-1}$}] (d) at (228.6bp,116.23bp) {};
  \node[vertex, label={$c_1$}] (v) at (302.4bp,116.23bp) {};
  \node[vertex] (e) at (385.54bp,230.66bp) {};
  \node[vertex, label={$c_{i_j}$}] (f) at (459.9bp,230.66bp) {};
  \node[vertex, label={$c_{i_j+1}$}] (g) at (520.06bp,186.96bp) {};
  \node[vertex] (1) at (543.04bp,116.23bp) {};
  \node[vertex, label=below:{$c_{i''_h-1}$}] (2) at (520.06bp,45.509bp) {};
  \node[vertex, label=below:{$c_{i''_h}$}] (3) at (459.9bp,1.8bp) {};
  \node[vertex] (4) at (385.54bp,1.8bp) {};
  \node[vertex, label=left:{$c_t$}] (w_t) at (325.38bp,45.509bp) {};
  \node[vertex, label={$c_2$}] (w_1) at (325.38bp,186.96bp) {};

  \draw [] (a) ..controls (15.021bp,116.23bp) and (64.633bp,116.23bp)  .. (b);
  \draw [] (b) ..controls (90.621bp,116.23bp) and (140.23bp,116.23bp)  .. (c);
  \draw [] (c) ..controls (166.22bp,116.23bp) and (215.83bp,116.23bp)  .. (d);
  \draw [] (d) ..controls (241.41bp,116.23bp) and (289.09bp,116.23bp)  .. (v);
  \draw [] (e) ..controls (398.45bp,230.66bp) and (446.49bp,230.66bp)  .. (f);
  \draw [] (f) ..controls (470.42bp,223.02bp) and (509.9bp,194.34bp)  .. (g);
  \draw [] (g) ..controls (524.08bp,174.59bp) and (539.16bp,128.18bp)  .. (1);
  \draw [] (1) ..controls (539.02bp,103.86bp) and (523.94bp,57.453bp)  .. (2);
  \draw [] (2) ..controls (509.54bp,37.865bp) and (470.06bp,9.1815bp)  .. (3);
  \draw [] (3) ..controls (446.99bp,1.8bp) and (398.95bp,1.8bp)  .. (4);
  \draw [] (4) ..controls (375.02bp,9.4437bp) and (335.54bp,38.128bp)  .. (w_t);
  \draw [] (v) ..controls (306.42bp,128.6bp) and (321.5bp,175.01bp)  .. (w_1);
  \draw [] (w_1) ..controls (335.9bp,194.6bp) and (375.38bp,223.28bp)  .. (e);
  \draw [] (w_t) ..controls (321.36bp,57.877bp) and (306.28bp,104.29bp)  .. (v);
  \draw[]{} (w_t) -- (f);
  \draw[]{}(3) -- (g);
  \draw[dotted]{} (v) -- (g);
\end{tikzpicture}
\caption{Another situation from the proof of \cref{lem:kactive}.}
\label{fig_biglollipop2}

\end{figure}
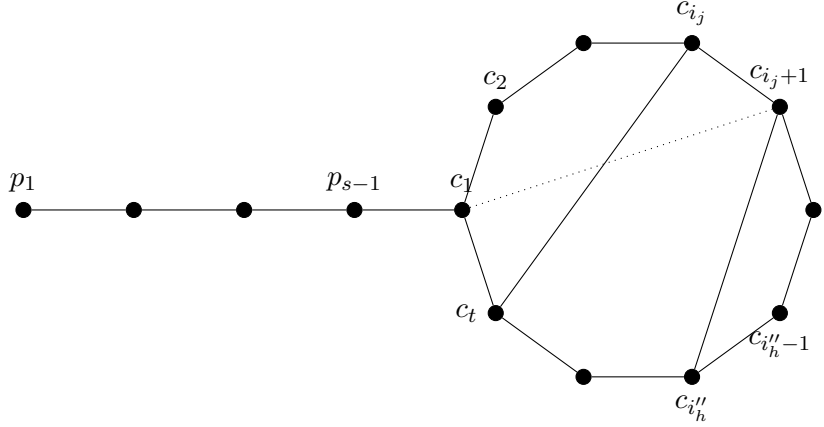
   
   Hence, $c_2,$ $c_{i'_1+1},$ $\dots,$ $c_{i'_{k'}+1},$ $c_{{i_j}+1},$ $c_{i''_1-1},$ $\dots,$ $c_{i''_{k''}-1},$ $c_t$, are $k'+k''+3\ge k+1$ distinct active vertices, proving the second conclusion.
\end{proof}

\begin{lemma}\label{l:hamiltonian}
     $C$ contains at least $k+1$ vertices of degree at least $k$ (in $C$). In particular, $C$ has at least $\frac{(k+1)(k-2)}{2}$ chords.
\end{lemma}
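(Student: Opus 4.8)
The plan is to combine \cref{lem:activeneighborhood} with the count of active vertices provided by \cref{lem:kactive}, after first making the key observation that every active vertex already has degree at least $k$ in $C$. Indeed, if $u$ is active, then by definition $G[C]$ admits a Hamiltonian path with ends $c_1$ and $u$, so \cref{lem:activeneighborhood} yields $N_G(u) \subseteq V(C)$; since $G$ has minimum degree $k$, this forces $d_C(u) = d_G(u) \geq k$. Thus the task reduces to exhibiting $k+1$ vertices of $C$ that are either active or otherwise known to have degree at least $k$ in $C$.

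First I would split on the value of $d_C(c_1)$. If $d_C(c_1) < k$, then \cref{lem:kactive} supplies at least $k+1$ active vertices, each of degree at least $k$ in $C$ by the observation above, and we are done. If instead $d_C(c_1) \geq k$, then \cref{lem:kactive} gives at least $k$ active vertices; since $c_1$ is not active, these are all distinct from $c_1$, and adjoining $c_1$ itself (which has degree at least $k$ in $C$ by assumption) produces $k+1$ distinct vertices of degree at least $k$ in $C$. The only point requiring care is this bookkeeping around $c_1$: one must invoke explicitly that $c_1$ is never active, so that it genuinely augments the list rather than duplicating an already-counted vertex.

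Finally, for the ``in particular'' statement I would pass from vertex degrees to chords by a handshake argument. Each vertex of $C$ is incident to exactly two edges of the cycle, so a vertex of degree at least $k$ in $C$ is incident to at least $k-2$ chords of $C$. Summing this chord-degree over the $k+1$ guaranteed vertices gives at least $(k+1)(k-2)$; since the total of chord-degrees over all vertices of $C$ equals twice the number of chords and the omitted vertices contribute nonnegatively, we obtain at least $\tfrac{(k+1)(k-2)}{2}$ chords. I do not expect any genuine obstacle here: the whole argument is a short deduction from the two preceding lemmas, the substantive work of securing the supply of active vertices having already been carried out in \cref{lem:kactive}.
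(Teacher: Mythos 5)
Your proof is correct and takes essentially the same route as the paper's: the same case split on $d_C(c_1)$, with Lemma~\ref{lem:kactive} supplying $k+1$ active vertices in the first case and $k$ active vertices plus $c_1$ itself in the second, followed by the same double-counting argument for the chord bound. You merely make explicit two points the paper leaves implicit, namely that Lemma~\ref{lem:activeneighborhood} forces every active vertex to have degree at least $k$ in $C$, and that $c_1$ is never active so it genuinely adds a $(k+1)$-st vertex.
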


\begin{proof}
    Indeed, by Lemma~\ref{lem:kactive}, $C$ contains at least $k+1$ active vertices if $d_C(c_1)<k$. Moreover, if $d_C(c_1)\geq k$, then $C$ has $k$ active vertices. In both cases, by Lemma~\ref{lem:activeneighborhood}, $C$ has at least $k+1$ vertices of degree at least $k$ in~$C$. The fact that $C$ has at least $\frac{(k+1)(k-2)}{2}$ chords follows since every one of the $k+1$ vertices provides $k-2$ chords of $C$ and the number is divided by two to avoid double counting.
\end{proof}

An edge of $C$ whose both ends are non-active is \emph{passive}.

\begin{lemma}\label{lem:allpath}
All active paths go through all passive edges of $C$.
\end{lemma}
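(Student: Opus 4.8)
The plan is to prove the stronger statement, by induction on $i$, that \emph{every} path in $\mathcal{S}_i$ contains all passive edges of $C$; since the active paths are exactly the members of $\bigcup_{i\ge 1}\mathcal{S}_i$, this immediately gives the lemma. The whole argument rests on one bookkeeping observation about the rotation defining $\mathcal{S}_{i+1}$: if $Q'=c_1QvuQw\in\mathcal{S}_{i+1}$ is obtained from $Q=c_1\dots u\in\mathcal{S}_i$ by choosing the chord $uv$ and the neighbor $w$ of $v$ on $vQu$, then $Q'$ is obtained from $Q$ by deleting the edge $vw$ and inserting the chord $uv$, that is $E(Q')=(E(Q)\setminus\{vw\})\cup\{uv\}$. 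Since $uv$ is a chord and $vw$ is an edge of $C$ (this is exactly the condition allowing the step), the only edge of $C$ that $Q'$ can lose compared to $Q$ is $vw$.

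For the base case I would simply observe that $\mathcal{S}_1=\{c_1c_2\dots c_t,\ c_1c_tc_{t-1}\dots c_2\}$, and these two Hamiltonian paths omit, respectively, only the edges $c_tc_1$ and $c_1c_2$ of $C$. Each of the endpoints $c_t$ and $c_2$ is the end of a path of $\mathcal{S}_1$, hence an active vertex, so neither $c_tc_1$ nor $c_1c_2$ is passive. Thus both paths of $\mathcal{S}_1$ contain every passive edge.

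For the inductive step, assume every path of $\mathcal{S}_i$ contains all passive edges. Every path of $\mathcal{S}_{i+1}$ arises as a rotation $Q'$ of some $Q\in\mathcal{S}_i$ as above, so by the edge identity it is enough to check that the single deleted edge $vw$ is not passive. The key point is that $w$ is precisely the new endpoint of $Q'$ (the path finishes at $w$ after traversing $uQw$) and that $w\neq c_1$: indeed $w$ lies on $vQu$ strictly beyond $v$, whereas $c_1$ is the first vertex of $Q$ and lies on the $c_1$--$v$ side. Hence $w$ is an active vertex, so the edge $vw$ has an active end and is not passive. Consequently $Q'$ retains every passive edge contained in $Q$; since $Q$ contains all passive edges of $C$ by induction, so does $Q'$, completing the induction.

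The one place that requires care---and the only possible source of error---is the verification that the deleted edge $vw$ always carries the newly created active endpoint, i.e.\ that $w\neq c_1$ so that $w$ genuinely qualifies as active. Once this is nailed down, the statement reduces entirely to tracking a single edge through each rotation, and no further difficulty arises.
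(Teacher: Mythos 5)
Your proof is correct and follows essentially the same route as the paper: induction on the index $i$ of $\mathcal{S}_i$, with the base case handled by noting that $c_2$ and $c_t$ are active (so $c_1c_2$ and $c_1c_t$ are not passive), and the inductive step by observing that each rotation removes exactly one edge of $C$, whose end is the new active endpoint of the rotated path, hence that edge is not passive. Your explicit verification that the new endpoint differs from $c_1$ is a small point the paper leaves implicit, but otherwise the two arguments coincide.
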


\begin{proof}
 Let $Q$ be an active path and $e$ be a passive edge of $C$.  By definition of active paths, there exist an integer $i\geq 1$ such that $Q \in \mathcal S_i$. Let us prove by induction on $i$ that $Q$ goes through $e$. 

 If $i=1$, then $Q$ is one of $c_1 c_2 \dots c_t$ or $c_1c_t c_{t-1} \dots c_2$. Note that $ e\notin \{c_1c_2, c_1c_t\}$ because $c_2$ and $c_t$ are both active. So $Q$ goes through all passive edges of $C$, in particular through $e$. 

 Now suppose $i>1$.  Since $Q=c_1\ldots u\in \mathcal S_i$, by definition of $\mathcal S_i$, there exists an active path $Q'= c_1 \dots u' \in \mathcal S_{i-1}$ such that $Q'$ has a chord $u'v$ and $u$ is the neighbor of $v$ in $vQ'u'$.  By the induction hypothesis, $Q'$ goes through $e$.  So does $Q$, since $e\neq uv$ (because $u$ is active). 
\end{proof}

\begin{lemma}\label{lem:nonactivepath}
    If $R$ is a subpath of $C$ containing only passive edges and $u$ is an active vertex, then $u$ has at most one neighbor in $R$.  Moreover, such a neighbor is an end of $R$. 
\end{lemma}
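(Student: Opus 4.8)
The plan is to fix a single active path witnessing that $u$ is active and to exploit, via \cref{lem:allpath}, the fact that this path must traverse every edge of $R$. The decisive feature will be an asymmetry between the two ends of $R$.

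First I would record the basic observation that every vertex of $R$ is non-active: each edge of $R$ is passive, and a passive edge has two non-active ends, so all vertices $r_1,\dots,r_m$ of $R$ are non-active (I treat the trivial case where $R$ is a single vertex separately). In particular $u\notin V(R)$, since $u$ is active, and likewise $u\neq c_1$. Next, because $u$ is active, I choose an active path $Q=c_1\dots u$. By \cref{lem:allpath}, $Q$ contains every passive edge of $C$, hence every edge of $R$; as these edges form a connected path and $Q$ is itself a path, $R$ occurs as a contiguous subpath of $Q$. Orienting $Q$ from $c_1$ to $u$, the block $R$ is met with one end $x$ encountered first (the $c_1$-side end) and the other end $y$ encountered last (the $u$-side end). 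Since $u\notin V(R)$, the vertex $y$ is not $u$, so $Q$ has at least one further vertex beyond $y$ on the way to $u$.

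The heart of the argument is the claim that $u$ has no neighbor in $V(R)\setminus\{y\}$. Suppose $uz\in E(G)$ with $z\in V(R)$ and $z\neq y$. Then $z$ lies strictly before $y$ in the $c_1$-to-$u$ traversal, so $uz$ is a genuine \emph{chord} of $Q$ (I would check that $z$ is not the penultimate vertex of $Q$, which holds because all of $R$ sits strictly before position $n$). The neighbor $w$ of $z$ in the subpath $zQu$ is then the next vertex of $R$ towards $y$; in particular $w\in V(R)$ and $zw$ is an edge of $R$, hence of $C$. Applying the defining construction of the sets $\mathcal S_i$ to $Q$, the chord $uz$, and $w$ produces the active path $c_1QzuQw$, so $w$ is active — contradicting that $w\in V(R)$ is non-active. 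Consequently the only vertex of $R$ that can be adjacent to $u$ is the end $y$, which simultaneously yields that $u$ has at most one neighbor in $R$ and that such a neighbor is an end of $R$.

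I expect the only delicate point to be the asymmetry between the two ends of $R$: the argument works precisely because the $u$-side end $y$ escapes the contradiction, its ``towards-$u$'' neighbor in $Q$ lying \emph{outside} $R$, so the corresponding edge need not be an edge of $C$ and the $\mathcal S$-construction cannot be triggered there; for every other vertex of $R$ the relevant edge is an edge of $C$, forcing a non-active vertex to become active. Keeping straight which endpoint is on the $u$-side, and verifying that $uz$ is really a chord rather than an edge of $Q$, are the bookkeeping steps that require care; the rest is routine.
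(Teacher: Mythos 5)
Your proof is correct and follows essentially the same route as the paper: use \cref{lem:allpath} to embed $R$ as a contiguous subpath of an active path $Q$ ending at $u$, then apply the $\mathcal S_{i+1}$-construction to show that a chord from $u$ to any vertex of $R$ other than the $u$-side end would make a vertex of $R$ active, a contradiction. Your additional bookkeeping (checking $u\notin V(R)$ and that $uz$ is genuinely a chord of $Q$) merely makes explicit what the paper leaves implicit.
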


\begin{proof}
    Since $u$ is active, there exists a path $Q = c_1 \dots  u$ in some $\mathcal S_i$. 
By Lemma~\ref{lem:allpath}, $R$ is a subpath of $Q$, so let $a_1, a_2, \ldots, a_r=V(R)$ be the consecutive vertices of $Q$, ordered in a such a way that $c_1, a_1, a_2, \ldots, a_r, u$ appear in this order along $Q$.  
If $u$ is adjacent to $a_j$, where $1\leq j < r$, then $ua_j$ is a chord of $Q$ and $a_ja_{j+1}$ is an edge of $C$. Hence, the path $Q'=c_1 Q a_j u Q a_{j+1}$ is in $\mathcal S_{i+1}$ and the vertex $a_{j+1}$ is  active, a contradiction to $a_ja_{j+1}$ being passive.  So, $a_r$ is the only possible neighbor of $u$ in $R$. 
\end{proof}

\begin{lemma}
    \label{l:contract}
    There exist sets $X_1,X_2\subseteq E(C)$ so that for $i\in\{1,2\}$, the graph $G_i$ obtained by deleting all vertices not in $C$ and contracting the edges of $X_i$
    has at least $k$ vertices and
    \begin{itemize}
        \item minimum degree at least $\left\lceil \frac{k+2}{2}\right\rceil$ if $i=1$, and 
        \item average degree at least $\tfrac{2}{3}(k+1)$ if $i=2$.
    \end{itemize}
\end{lemma}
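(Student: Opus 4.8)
The plan is to exploit the structure established above. List the active vertices $u_1,\dots,u_m$ in their cyclic order along $C$, where $m\ge k$ by \cref{lem:kactive}, and recall that each $u_i$ satisfies $d_C(u_i)\ge k$ since its whole neighbourhood lies on $C$ by \cref{lem:activeneighborhood}. Between two consecutive active vertices $u_j,u_{j+1}$ lies a (possibly empty) \emph{passive segment} $S_j$, a maximal run of non-active vertices. The decisive tool for both contractions is \cref{lem:nonactivepath}: an active vertex has at most one neighbour in each passive segment, and only at an endpoint. I would first pass to the contraction $H'$ of $G[V(C)]$ in which every passive segment $S_j$ is contracted to a single vertex $s_j$. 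The point of \cref{lem:nonactivepath} is that no two neighbours of an active vertex are identified by this operation (a neighbour of $u$ inside $S_l$ and the active vertices $u_l,u_{l+1}$ stay distinct, and different segments collapse to different vertices), so every active vertex keeps its full degree. Hence in $H'$ the $m\ge k$ active vertices still have degree at least $k$, and the only vertices that may have small degree are the segment vertices $s_j$, each of degree at least $2$.

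For $X_1$ I would then repair the segment vertices. Take $X_1$ to consist of the passive edges together with, for every segment vertex $s_j$ whose degree in $H'$ is below $\lceil\frac{k+2}{2}\rceil$, one edge of $C$ joining $S_j$ to a neighbouring active vertex; this contracts every deficient $s_j$ into an adjacent active vertex. The resulting graph has at least $k$ vertices, since all active vertices survive. It remains to check the minimum degree. A surviving segment vertex is kept precisely because its degree is already at least $\lceil\frac{k+2}{2}\rceil$. For an active vertex $u_j$, its degree can only drop when a neighbour $u_l$ and a neighbour of $u_j$ inside the \emph{same} segment $S_l$ are identified by a merge; by \cref{lem:nonactivepath} this happens at most once per segment, and, crucially, a merge only occurs for a deficient (hence low-degree) segment, which by definition is adjacent to few active vertices. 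Charging these identifications against the $k$ neighbours of $u_j$ should leave degree at least $\lceil\frac{k+2}{2}\rceil$.

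For $X_2$ I would distinguish two regimes according to the number $q\le m$ of non-empty passive segments. If $q\le m/2$, simply take $X_2$ to be the set of passive edges, i.e.\ keep the contraction $H'$ itself: the $m$ active vertices contribute total degree at least $km$ and the $q$ segment vertices total degree at least $2q$, so the average degree of $H'$ is at least $\frac{km+2q}{m+q}$, a quantity that is decreasing in $q$ for $k>2$ and equals exactly $\frac{2}{3}(k+1)$ at $q=m/2$; hence it is at least $\frac{2}{3}(k+1)$. If instead $q>m/2$, there are many short segments dragging the average down, and I would additionally contract a suitable number of the lightest segment vertices into adjacent active vertices, trading one vertex for the few edges lost in the identification. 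Optimising how many segments to absorb against how much active degree is thereby collapsed is exactly the computation that produces the constant $\frac{2}{3}$.

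The routine parts are the degree-preservation bookkeeping and the two averaging estimates. The main obstacle is pinning down the exact constants: the passage from the easy bound $\lceil\frac{k-1}{2}\rceil$ (the number of distinct segments an active vertex reaches, since at most two of its neighbours fall in any one segment-plus-active block) up to the required $\lceil\frac{k+2}{2}\rceil$ in the minimum-degree statement, and the tight balancing in the regime $q>m/2$ that yields $\frac{2}{3}(k+1)$ rather than the weaker $\frac{k+2}{2}$. Both hinge on using \cref{lem:nonactivepath} not merely to bound how many neighbours of an active vertex fall near one segment, but to observe that a segment causing many such coincidences is itself of high degree and therefore never contracted, so the two sources of loss — low-degree segments and degree-collapsing merges — cannot occur simultaneously.
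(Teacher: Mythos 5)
Your opening move is the paper's: contract the passive edges and use \cref{lem:activeneighborhood} and \cref{lem:nonactivepath} to see that every active vertex keeps degree at least $k$; your treatment of the few-segments regime of the average-degree claim is also correct (the computation $\frac{km+2q}{m+q}\ge\tfrac{2}{3}(k+1)$ for $q\le m/2$ is fine, and is a slightly different counting than the paper's). But both target bounds are left with genuine gaps. For $X_1$, the mechanism you invoke to climb from roughly $\lceil k/2\rceil$ to $\lceil\frac{k+2}{2}\rceil$ --- that a deficient segment is adjacent to few active vertices, so ``low-degree segments and degree-collapsing merges cannot occur simultaneously'' --- bounds the wrong quantity. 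What must be controlled is, for a \emph{fixed} active vertex $u$, the number of distinct segments whose contraction identifies two neighbours of $u$; by \cref{lem:nonactivepath} each segment contributes at most one such identification at $u$ regardless of its own degree, and deficiency does nothing to limit how many \emph{different} deficient segments do this: a segment of degree $3$ is deficient for every $k\ge 5$, yet its merge can still collapse a segment-neighbour of $u$ with an active neighbour of $u$. So in the worst case $u$ loses about half of its neighbourhood and you are stuck near $\lceil k/2\rceil$, as you yourself concede. Two further defects: you never ensure that your merges form a matching --- two deficient segments flanking the same active vertex can both be contracted into it, creating classes of size $3$ and losses worse than one half --- and a surviving (non-deficient) segment vertex can itself lose degree through merges of \emph{other} segments (no lemma forbids chords between two passive segments), so ``kept because its degree is already at least $\lceil\frac{k+2}{2}\rceil$'' is not a conclusion that survives the contraction. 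The paper avoids all three problems at once by contracting \emph{every} segment vertex into its predecessor along a fixed orientation of $C_0$: this is automatically a perfect matching on segment vertices, every vertex of $G_1$ is the class of an active vertex (whence the $k$-vertex claim), and the degree bound is the two cycle neighbours plus at least $\left\lceil\frac{k-2}{2}\right\rceil$ surviving chord classes.

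For $X_2$, the case $q>m/2$ --- which is where the content of the lemma lies --- is not proved at all: ``optimising how many segments to absorb'' names no quantity to optimise over. The paper's actual argument in this regime is a dichotomy on chord types: writing $n_a$ for the number of chords of $C_0$ with both ends active and $n_b$ for those with exactly one active end, each active vertex supplies $k-2$ chords, so $2n_a+n_b\ge(k-2)m$; the uncontracted graph $G_0$ has average degree at least $2+\frac{n_a+n_b}{m}$, the fully contracted $G_1$ has average degree at least $2+\frac{2n_a}{m}$, and $\max(n_a+n_b,\,2n_a)\ge\tfrac{2}{3}(k-2)m$, so one of the two choices $X_2=X_0$ or $X_2=X_1$ always achieves $\tfrac{2}{3}(k+1)$. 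Your greedy ``lightest segments first'' plan, with its accounting of ``trading one vertex for the few edges lost,'' would still need exactly this $n_a$-versus-$n_b$ trade-off to land on the constant $\tfrac{2}{3}$, and nothing in your sketch supplies it.
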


\begin{proof}
    By~Lemma~\ref{lem:kactive}, $C$ has at least $k$ active vertices. Let $X_0$ be the set of 
    passive edges of $C$.  Let $G_0$ be the graph obtained from $G$ by deleting the vertices outside of $C$
    and contracting the edges of $X_0$; and let $C_0$ be the cycle in $G$ obtained from $C$ by this contraction.
    By~Lemma~\ref{lem:activeneighborhood} and Lemma~\ref{lem:nonactivepath}, the active vertices have the same degree in $G_0$ as in $G$.

    Note that the cycle $C_0$ is edgewise partitioned into edges whose both ends are active vertices, and
    paths of length~2 whose both ends are active and whose unique internal
    vertex is not.  Let $a_1b_1c_1$, \dots, $a_pb_pc_p$ be the paths
    of length~2 of $C_0$ such that the $a_i$'s and $c_i$'s are active and the
    $b_i$'s are not.  Suppose that $a_1, b_1, c_1$, \dots, $a_p, b_p, c_p$
    appear in this order along $C_0$ (note that possibly $c_i=a_{i+1}$,
    subscript taken modulo $p$).

    Let $X'_1=\{a_ib_i:i\in \{1,\ldots,p\}\}$ and $X_1=X_0\cup X'_1$; thus, $G_1$ is the graph obtained from $G_0$ by contracting the edges in $X'_1$.
    Note that the edges of $X'_1$ are vertex-disjoint.   Let $C_1$ be the cycle in $G_1$ obtained from $C_0$ by this contraction.
    Active vertices are in one-to-one correspondence to the vertices of $C_1$; in particular, $C_1$ has length at least $k$.
    Moreover, each vertex of $C_1$ has its two neighbors along $C_1$, and it is incident with at least $\left\lceil \frac{k-2}{2}\right\rceil$
    chords (because contracting the edges of $X'_1$ decreases the number of incident chords
    at most by half).  Therefore, $G_1$ has minimum degree at least $\left\lceil \frac{k+2}{2}\right\rceil$.

    We let $X_2=X_1$ if $G_1$ has average degree greater than $G_0$ and $X_2=X_0$ otherwise.  Thus $G_2 = G_1$ if $X_2=X_1$, and $G_2 = G_0$ otherwise.   Let $n_a$ be the number of edges in $E(G_0)\setminus E(C_0)$
    with both active ends, let $n_b$ be the number of edges in $E(G_0)\setminus E(C_0)$ with exactly one active end, and let $m$ be the number of active vertices.
    Since each active vertex is incident with at least $k-2$ chords in $G_0$,
    we have
    $$2n_a+n_b\ge (k-2)m.$$
    The average degree of $G_0$ is at least
    $$\frac{2|E(G_0)|}{|V(G_0)|}\ge \frac{2|C_0|+2n_a+2n_b}{|C_0|}=2+\frac{2n_a+2n_b}{|C_0|}\ge 2+\frac{n_a+n_b}{m}.$$
    The graph $G_1$ has at least $n_a$ chords and $m$ vertices, and thus it has average degree at least
    $$\frac{2|E(G_1)|}{|V(G_1)|}\ge 2+\frac{2n_a}{m}.$$
    Hence, $G_2$ has average degree at least
    $$2+\frac{\max(n_a+n_b,2n_a)}{m}\ge 2+\frac{\max((k-2)m-n_a,2n_a)}{m}\ge 2+\tfrac{2}{3}(k-2)=\tfrac{2}{3}(k+1).$$
\end{proof}

\section{Cyclic minors}
\label{sec:cyclic}

In Lemma~\ref{l:contract}, we showed that the lollipop cycle $C$  (obtained in the proof of Theorem~\ref{th:main}) admits a cyclic minor $M$ with vertices $x_1,x_2,\dots ,x_m$ (enumerated in the cyclic order) with average degree more than $2k/3$. Let us push this argument to show that we can further contract $M$ in a cyclic way to obtain a large complete bipartite graph. To see this, we first remind the statement of a celebrated theorem of Marcus and Tardos~\cite{MARCUS2004153}. 

\begin{theo}
\label{th:mt}
For every integer $a$, there exists an integer $c_a$ such for every $n\times n$ $(0-1)$-matrix $M$ containing at least $c_a n$ 1-entries, there exists a partition of $M$ into $a\times a$ blocks such that each block contains a 1-entry. 
\end{theo}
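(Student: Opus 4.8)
The plan is to deduce Theorem~\ref{th:mt} from the classical extremal form of the Marcus--Tardos theorem, namely the linear bound on the number of $1$-entries in a matrix avoiding a fixed permutation pattern, and then to sketch the short proof of that bound. First I would reduce the block-partition conclusion to forcing a single, well-chosen permutation pattern $P^\star$ of size $a^2$. Identify the $a^2$ rows (resp.\ columns) of $P^\star$ with $[a]\times[a]$, so that rows split into $a$ consecutive groups of size $a$, and take $P^\star$ to be the permutation matrix of the map $(p,u)\mapsto(u,p)$; then the unique $1$ in grid cell $(p,q)$ is contributed by row $(p,q)$, so after grouping rows and columns into $a$ blocks each of the $a^2$ cells of the grid receives exactly one $1$. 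If $M$ contains $P^\star$ as a pattern, witnessed by rows $r_1<\cdots<r_{a^2}$ and columns $s_1<\cdots<s_{a^2}$, then cutting $\{1,\dots,n\}$ into $a$ consecutive row-intervals with the $p$-th cut between $r_{pa}$ and $r_{pa+1}$ (symmetrically for columns) gives a partition of $M$ into $a\times a$ blocks in which every block inherits the $P^\star$-one lying inside it. Thus it suffices to force $P^\star$, and I set $c_a:=c_{P^\star}$.

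Next I would prove the extremal bound. Let $P$ be a fixed $k\times k$ permutation matrix ($k=a^2$) and let $f(n)$ be the maximum number of $1$-entries in an $n\times n$ matrix avoiding $P$; the goal is $f(n)\le c_P\,n$ via the Marcus--Tardos block recursion. Partition the matrix into $s\times s$ blocks with $s=k^2$, forming an $(n/s)\times(n/s)$ grid, and form the reduced matrix $B$ carrying a $1$ on each nonempty block. A copy of $P$ in $B$ lifts, by choosing one $1$ per selected block, to a copy of $P$ in the original matrix, so $B$ avoids $P$ and has at most $f(n/s)$ nonempty blocks. Call a block \emph{wide} if its ones occupy at least $k$ distinct columns and \emph{tall} if they occupy at least $k$ distinct rows; a block that is neither has ones in at most $k-1$ rows and $k-1$ columns, hence at most $(k-1)^2$ ones, contributing in total at most $(k-1)^2 f(n/s)$.

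The crux is to bound the wide (and, symmetrically, tall) blocks per strip. In one horizontal strip (a row of the grid, of height $s=k^2$), classify each wide block by the set of $k$ of its occupied columns, one of $\binom{s}{k}$ types. If a strip carried more than $(k-1)\binom{s}{k}$ wide blocks, some type would recur on $k$ of them; these $k$ blocks lie in distinct column-intervals and share a common set of occupied columns, and the $s=k^2$ available rows of the strip then leave enough room to select ones realizing the row-order of $P$, producing a forbidden copy of $P$. Making this embedding precise --- choosing the rows so that the full permutation $P$, not merely a monotone pattern, is realized, which is exactly where $s=k^2$ is used --- is the step I expect to be the main obstacle. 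Granting it, there are at most $(k-1)\binom{s}{k}(n/s)$ wide blocks overall, each with at most $s^2$ ones, and likewise for tall blocks.

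Summing the three contributions yields
\[
f(n)\ \le\ 2\,s(k-1)\binom{s}{k}\,n\ +\ (k-1)^2\,f(n/s),\qquad s=k^2 .
\]
Since $(k-1)^2/s=(k-1)^2/k^2<1$, unrolling this recursion sums a convergent geometric series and gives $f(n)\le c_P\,n$ with $c_P=O\!\big(k^4\binom{k^2}{k}\big)$. Contrapositively, any $n\times n$ matrix with more than $c_P\,n$ ones contains $P$; applying this to $P=P^\star$ supplies the constant $c_a$ and the desired partition, completing Theorem~\ref{th:mt}.
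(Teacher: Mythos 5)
Your first paragraph is, in substance, exactly the paper's proof: the paper also deduces Theorem~\ref{th:mt} from Theorem~1 of~\cite{MARCUS2004153} (the linear bound $f(n,P)=O(n)$ for pattern-avoiding matrices) applied to the $a^2\times a^2$ permutation matrix of the map $(p,u)\mapsto(u,p)$ --- this is precisely the matrix shown in Fig.~\ref{f:P} for $a=3$ --- followed by the same interval-cutting argument. The paper stops there and treats the Marcus--Tardos bound as a black box, so had you done the same, your proof would be complete and identical to the paper's.

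Your additional sketch of the Marcus--Tardos bound itself, however, contains a genuine error, and it sits exactly at the step you flagged as the main obstacle. You pigeonhole the \emph{wide} blocks (ones spanning at least $k$ columns) within a \emph{horizontal} strip and hope that the $s=k^2$ rows of the strip leave ``enough room'' to realize the row-order of $P$. That orientation is wrong, and no choice of $s$ repairs it: take the matrix whose ones form exactly one full row; it avoids every permutation pattern with $k\ge 2$, yet in the horizontal strip containing that row all $n/s$ blocks are wide and may be assigned the same type, so the claimed per-horizontal-strip bound on wide blocks is simply false. The correct pairing is the opposite one: wide blocks are bounded per \emph{column} of blocks. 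There, $k$ wide blocks sharing a common $k$-set $c_1<\dots<c_k$ of occupied columns are stacked vertically, so choosing in the $i$-th block from the top a one lying in column $c_{\pi(i)}$ automatically gives increasing rows (the blocks occupy disjoint row intervals) and hence a copy of $P$ with no further work; dually, tall blocks are bounded per row of blocks. Note also that the true role of $s=k^2$ is not to assist any embedding (the embedding works for any $s\ge k$); it is only needed so that blocks that are neither wide nor tall carry at most $(k-1)^2<s$ ones, which makes the ratio $(k-1)^2/s$ in your recursion less than $1$ and the geometric series converge. Once the orientation is swapped, your total count of wide and tall blocks, your recursion, and your constant $c_P=O\bigl(k^4\binom{k^2}{k}\bigr)$ all go through as written.
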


Note that in~\cite{MARCUS2004153}, Theorem~\ref{th:mt} is presented slightly differently. The existence of a number $f(n, P)$ is proved for all integers $n$ and all permutation matrices $P$, with the following property: $f(n, P)$ is the maximum number of 1-entries in an $n \times n$ matrix that does not contain $P$ as a submatrix.  Furthermore, Theorem~1 in ~\cite{MARCUS2004153} states that $f(n, P) = O(n)$.  Theorem~\ref{th:mt} for $a$ is a direct consequence of this statement, with a specific $a^2 \times a^2$ matrix~$P$, see Fig.~\ref{f:P} for $a=3$.

\begin{figure}
\[
\left[
\begin{array}{ccc|ccc|ccc}
1 & 0 & 0 & 0 & 0 & 0 & 0 & 0 & 0 \\
0 & 0 & 0 & 1 & 0 & 0 & 0 & 0 & 0 \\
0 & 0 & 0 & 0 & 0 & 0 & 1 & 0 & 0 \\
\hline
0 & 1 & 0 & 0 & 0 & 0 & 0 & 0 & 0 \\
0 & 0 & 0 & 0 & 1 & 0 & 0 & 0 & 0 \\
0 & 0 & 0 & 0 & 0 & 0 & 0 & 1 & 0 \\
\hline
0 & 0 & 1 & 0 & 0 & 0 & 0 & 0 & 0 \\
0 & 0 & 0 & 0 & 0 & 1 & 0 & 0 & 0 \\
0 & 0 & 0 & 0 & 0 & 0 & 0 & 0 & 1
\end{array}
\right]
\]
\caption{A permutation matrix}\label{f:P}
\end{figure}

Now, in order to prove the next results, given a matrix $B$, we define a sub-matrix $B_{n_1:n_2,m_1:m_2}$ as $(b_{i,j})_{\begin{smallmatrix}n_1\leq i\leq n_2 \\ m_1\leq j\leq m_2\end{smallmatrix}}$. Furthermore, we denote by $K'_{\ell,\ell}$ the graph obtained from $K_{\ell,\ell}$ by adding a path on each of its partite set.

\begin{theo} 
 For every integer $\ell$, there exists an integer $k$ such that every graph with minimum degree at least $k$ contains $K'_{\ell, \ell}$ as a cyclic minor.
 \end{theo}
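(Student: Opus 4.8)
The plan is to feed the cyclic minor produced by Lemma~\ref{l:contract} into the Marcus--Tardos theorem (Theorem~\ref{th:mt}). Write $M$ for that cyclic minor: it consists of a cycle $x_1x_2\cdots x_m x_1$ together with a set of chords, and it has average degree more than $\tfrac{2}{3}(k+1)$, hence more than $\tfrac{k-2}{3}\,m$ chords. Since contracting arcs of the cycle of $M$ amounts to contracting (larger) arcs of the lollipop cycle from Theorem~\ref{th:main} while retaining a chosen subset of its chords, every graph obtained from $M$ by contracting arcs of $x_1\cdots x_m$ is again a cyclic minor of $G$; so it suffices to contract arcs of $M$ into $K'_{\ell,\ell}$. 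Fix the Marcus--Tardos constant $c_\ell$ attached to the value $a=\ell$, and let $k$ be large enough in terms of $\ell$.

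The core mechanism I would use is the following. Suppose we can find two disjoint consecutive arcs $I,J$ of the cycle, each of some length $L$, with at least $c_\ell\cdot L$ chords joining $I$ to $J$. Encode these crossing chords as an $L\times L$ $(0$-$1)$-matrix (rows indexed by $I$, columns by $J$) and apply Theorem~\ref{th:mt}: this partitions $I$ into $\ell$ sub-arcs $A_1,\dots,A_\ell$ and $J$ into $\ell$ sub-arcs $B_1,\dots,B_\ell$ so that every block $A_s\times B_t$ contains a chord. Now delete all vertices outside $I\cup J$, contract each $A_s$ and each $B_t$ to a single vertex, and keep only the needed edges: the cycle edges inside $I$ and inside $J$ give the two spanning paths $a_1\cdots a_\ell$ and $b_1\cdots b_\ell$, the boundary edge between $I$ and $J$ gives $a_\ell b_1$, the chord from block $A_1\times B_\ell$ re-closes the cycle as $b_\ell a_1$, and one chord from each remaining block $A_s\times B_t$ supplies the edge $a_sb_t$. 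After deleting every other chord, the arc-contraction of this Hamiltonian subgraph is exactly $K'_{\ell,\ell}$.

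The delicate point, which I expect to be the main obstacle, is guaranteeing the \emph{input} to this mechanism: a pair of arcs $I,J$ carrying $\Omega(L)$ crossing chords. When many chords have large cyclic span this is immediate --- split the cycle into two complementary halves and count the chords crossing the split. The hard case is the \emph{local} regime, where all chords are short: then no single bipartition of the cycle carries enough crossing chords, and a crude dyadic accounting over the chord spans loses a spurious $\log m$ factor that a constant average degree cannot pay for. This is precisely why a direct averaging fails and why the \emph{scale-free} strength of Theorem~\ref{th:mt} --- which, applied to the symmetric chord matrix, produces an $a\times a$ grid of nonempty blocks capturing all scales \emph{simultaneously} --- is needed; the remaining, purely geometric, task is then to extract from that grid $\ell$ row-arcs lying entirely to one side of $\ell$ col-arcs.

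To carry this out I would also exploit the flexibility of the cyclic-minor definition: the Hamiltonian cycle that is contracted need not be the distinguished cycle $x_1\cdots x_m$ of $M$, so one is free to \emph{re-route} through chords, turning short chords into long ones and thereby manufacturing the required separation. The genuine difficulty is the case where the chords are spread across all length scales at once, since then neither a single balanced bipartition nor a single re-routing scale suffices; reconciling the all-scales block structure coming from Theorem~\ref{th:mt} with a (possibly iterated) choice of separated arcs is the step I would concentrate on. It is this step that forces $k$ to depend on $\ell$, and the accompanying quantitative slack is of the same flavour as the gap highlighted in the open question on $f(\ell)$.
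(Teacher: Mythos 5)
Your proposal assembles the right ingredients (Lemma~\ref{l:contract} plus Theorem~\ref{th:mt}) and even names the right object --- the symmetric adjacency matrix of the cyclic minor $M$, with rows and columns both listed in the cyclic order --- but it stops exactly where the paper's proof finishes, and what you flag as the ``genuine difficulty'' (manufacturing two separated arcs carrying $\Omega(L)$ crossing chords, the short-chord/multi-scale regime, re-routing through chords) is a red herring: the paper never looks for such a pair of arcs at all. It applies Theorem~\ref{th:mt} with $a=2\ell$ directly to the full $m\times m$ adjacency matrix of $M$ (which has at least $c_am$ one-entries, since the average degree is at least $2k/3\ge c_a$), obtaining breakpoints $0=i_0<\dots<i_{2\ell}=m$ for the rows and $0=j_0<\dots<j_{2\ell}=m$ for the columns with every block containing a $1$. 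The ``purely geometric task'' you leave open is then a two-line pigeonhole argument: rows and columns index the \emph{same} cyclically ordered vertex set, so one may compare the two midpoints and assume, up to transposing the matrix, that $i_\ell\le j_\ell$; then the first $\ell$ row-intervals $X_1,\dots,X_\ell$ (covering $v_1,\dots,v_{i_\ell}$) and the last $\ell$ column-intervals $Y_1,\dots,Y_\ell$ (covering $v_{j_\ell+1},\dots,v_m$) are automatically pairwise disjoint, and every block between an $X_x$ and a $Y_y$ contains a $1$, i.e.\ an edge of $M$. Contracting each interval and absorbing the leftover middle arc $\{v_{i_\ell+1},\dots,v_{j_\ell}\}$ into $Y_1$ yields $K'_{\ell,\ell}$. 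No re-routing and no iteration over scales is needed; note also that the doubling $a=2\ell$ (you fixed $a=\ell$ for the symmetric matrix as well) is precisely what pays for discarding half of the row-blocks and half of the column-blocks --- with only $\ell$ intervals on each axis the disjointness you need is impossible in general, which may be why the step looked hard to you.

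There is a secondary, fixable, flaw in your ``core mechanism'': you delete all vertices outside $I\cup J$ and re-close the Hamiltonian cycle through a chord of the block $A_1\times B_\ell$. That chord's endpoints may be interior to $A_1$ and to $B_\ell$, so the cycle you build misses the initial segment of $A_1$ and the final segment of $B_\ell$; the Marcus--Tardos chords of the other blocks meeting $A_1$ or $B_\ell$ may have their endpoints precisely in those discarded segments, and the construction collapses. The remedy is what the paper does: never delete cycle vertices, but contract the unused arcs into adjacent blocks, so that the Hamiltonian cycle of the target minor is inherited from the cycle of $M$ itself rather than re-closed by a chord.
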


\begin{proof}
  Set $a= 2\ell$ and apply Theorem~\ref{th:mt}. Consider the smallest integer $k$ such that $2k/3 \geq c_a$ (note that $k$ depends only on $\ell$). Now consider a graph with minimum degree at least $k$. It contains a cyclic minor $M$ with vertices $v_1,v_2,\dots, v_m$ (enumerated in the cyclic order) and average degree at least $2k/3 \geq c_a$ by Lemma~\ref{l:contract}. Let $B$ be the $m\times m$ $(0-1)$-adjacency matrix of $M$ and denote by $b_{i,j}$ the element in the $i$-th line and $j$-th column of $B$. Notice that $B$ has at least $2km/3 \geq c_a m$ entries~$1$. Hence, by Theorem~\ref{th:mt}, there is a partition of $B$ into $(2\ell)\times(2\ell)$ blocks such that each block contains an entry~1. 
  So, there exist integers $0 = i_0 < \dots < i_{2\ell} = m$ and $0 = j_0 < \dots < j_{2\ell} = m$ such that for all $x, y \in \{ 0, \dots, 2\ell-1\}$, $B_{i_x+1:i_{x+1},j_y+1:j_{i+1}}$ contains a 1. 
  
  Up to symmetry, we may assume $i_\ell \leq j_\ell$. 
  Note that the sets of vertices $\{v_1, \dots, v_{i_\ell}\}$ and $\{v_{j_\ell+1}, \dots, v_{j_m}\}$ are disjoint. Now set $X_{x+1} = \{v_{i_x+1}, \dots, v_{i_{x+1}}\}$ for all $x\in \{0, \dots, \ell-1\}$ and  $Y_{y-\ell+1} = \{v_{j_y+1}, \dots, v_{j_{y+1}}\}$ for all $y\in \{\ell, \dots, 2\ell-1\}$.  These sets are pairwise disjoint because $i_\ell \leq j_\ell$.  And there is an edge between any $X_x$ and any $Y_y$ because each block $B_{i_x+1:i_{x+1},j_y+1:j_{i+1}}$ contains a 1.
  By contracting each  set $X_1, \dots, X_\ell$ and $Y_1, \dots, Y_\ell$, and  furthermore contract the vertices in $\{v_{i_\ell+1} \dots, v_{j_\ell}\}$ (if any) with $Y_1$, we therefore obtain $K'_{\ell, \ell}$. 
\end{proof}

Knowing this result, a natural question is then to try to find complete graphs as cyclic minors of $C$. In order to do it, we recall the definition of $f(\ell)$, which is the smallest integer $\delta$ such that every graph of minimum degree at least $\delta$ contains $K_\ell$ as a cyclic minor. Let us start our investigation with small cliques as cyclic minors:

\begin{lemma}\label{l:clique}
We have $f(3)=2$, $f(4)=3$ and $f(5) \leq 8$. 
\end{lemma}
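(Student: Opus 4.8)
```latex
The plan is to prove the three values separately, establishing lower bounds by exhibiting graphs of the appropriate minimum degree with no cyclic $K_\ell$-minor, and upper bounds by showing that the required minimum degree forces the clique minor, using the dense cyclic minor $M$ from Lemma~\ref{l:contract} as the starting point.

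For $f(3)=2$, the easy direction is the upper bound: a graph of minimum degree $\geq 2$ contains a cycle (by Lemma~\ref{l:contain}, say), and any cycle of length $\geq 3$ contracts down to a triangle $K_3=K_\ell$ for $\ell=3$ in the cyclic sense, since we are free to contract the Hamiltonian cycle edges until only three vertices remain. The matching lower bound $f(3)\geq 2$ is immediate because minimum degree $1$ (e.g. a single edge, or a path) contains no cycle at all, hence no cyclic minor with $\geq 3$ vertices. For $f(4)=3$, I would likewise argue the upper bound $f(4)\leq 3$ by invoking Lemma~\ref{l:hamiltonian} or Lemma~\ref{l:contract}: minimum degree $3$ yields a lollipop cycle $C$ whose cyclic minor has enough chords that, after contracting passive edges, one obtains a cycle on at least $k=3$ active vertices where each active vertex sees at least $k-2=1$ chord; a short case analysis should let one further contract to a $K_4$. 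The lower bound $f(4)\geq 3$ requires a graph of minimum degree $2$ with no cyclic $K_4$-minor; the natural candidate is a long cycle $C_n$ itself (or a disjoint union of cycles), which has minimum degree $2$ but whose only Hamiltonian subgraph is the cycle, contracting only to triangles, never to $K_4$.

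For the genuinely interesting bound $f(5)\leq 8$, I would start from a graph $G$ of minimum degree $k=8$ and apply Lemma~\ref{l:contract} to extract the cyclic minor $M$ on the active vertices of the optimal lollipop cycle $C$, whose cyclic order $x_1,\dots,x_m$ is inherited from $C$. The key quantitative input is that each active vertex is incident with at least $k-2=6$ chords in $G_0$, and after the contractions producing $G_1$ we retain average degree at least $\tfrac{2}{3}(k+1)=6$. The strategy is then to locate, in this dense circularly-ordered graph, five arcs $A_1,\dots,A_5$ of consecutive vertices such that every pair of arcs is joined by at least one edge (a ``cyclic $K_5$ pattern''), after which contracting each arc yields the desired $K_5$. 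Finding such five mutually-adjacent arcs from a high enough chord density is the crux, and is essentially a Ramsey-type or interval-selection argument on the circular sequence.

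The main obstacle I anticipate is exactly this last combinatorial extraction: chords can be ``short'' (joining nearby vertices) and thus uninformative for producing edges between far-apart arcs, so merely counting chords does not suffice — one needs crossing or long chords, precisely the phenomenon the authors flag in the introduction when they warn that parallel chords obstruct the contraction. I would therefore try to show that with six chords per active vertex one can greedily carve the cycle into five intervals realizing all $\binom{5}{2}=10$ cross-adjacencies, probably by a pigeonhole argument that bounds how many chords can fail to cross a chosen partition into arcs; if a direct greedy choice of boundaries fails, I would fall back on averaging over all rotations of the five cut points to show some rotation achieves all ten adjacencies. Establishing the precise threshold $k=8$ (rather than a weaker $O(1)$) will be the delicate part, and I expect it to hinge on tracking the worst case where chords are as concentrated as possible against the sought partition.
```
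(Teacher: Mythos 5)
Your reductions for $f(3)$ and the lower bounds are fine and match the paper, but both upper-bound arguments that carry real content have gaps. The serious one is $f(5)\le 8$. You correctly observe that a cyclic $K_5$-minor of the dense cyclic minor $F$ (average degree $\ge 6$, from Lemma~\ref{l:contract}) is the same thing as a partition of its Hamiltonian cycle into five pairwise adjacent arcs, but the extraction methods you propose --- pigeonhole on chord counts, greedy placement of boundaries, or averaging over rotations of five cut points --- cannot work as stated. Consider the third power of a long cycle (each vertex joined to the six vertices within cycle-distance $3$): it has average degree exactly $6$, yet every chord has length at most $3$, so two arcs that are non-consecutive and both of size at least $3$ are never adjacent. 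Hence \emph{every} rotation of five roughly equally spaced cut points fails, and the only partitions that succeed are highly degenerate ones (several consecutive arcs of size at most $2$ beside one giant arc), which no counting or averaging argument of the kind you sketch will locate. The paper's missing ingredient is a \emph{criticality} step: one may assume that contracting any edge $e$ of the Hamiltonian cycle drops the average degree below $6$, which by counting ($|E(F/e)|\le 3|V(F/e)|-1=3|V(F)|-4\le |E(F)|-4$) forces every cycle edge to lie in at least three triangles. The proof then defines ``peaks'' of cycle edges and runs a local shortest-path argument: for a peak $z$ of $e=uv$, take a shortest path $P$ in $C-e$ from $z$ to $\{u,v\}$, say ending at $u$; a common neighbor $z_1$ of $u,v$ lies inside $P$, and a peak $z'$ of the cycle edge $uv'$ (with $v'$ the neighbor of $u$ on $P$) lies outside $P\cup\{v\}$ by minimality; contracting $P-\{u,z\}$ and the complementary path yields $K_5$. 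This local, criticality-based argument is exactly what defeats the short-chord configurations, and nothing in your proposal substitutes for it.

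The $f(4)\le 3$ step has a smaller but still genuine gap: ``a short case analysis should let one further contract to a $K_4$'' is where all the work lies. From a cyclic minor $F$ of minimum degree $3$ you need two chords crossing in the right way, and this is not automatic --- one must rule out that all chords are nested/parallel (equivalently, that $F$ is outerplanar, which minimum degree $3$ does forbid, but that requires an argument). The paper produces the crossing pair directly: choose a chord $uv$ whose subpath $P$ of $C$ is as short as possible, take any internal vertex $z$ of $P$; its chord $zx$ must leave $P$ by minimality, giving four vertex-disjoint arcs containing $u$, $z$, $v$, $x$ respectively, whose contraction is $K_4$. You would need to supply this (or an equivalent) argument to close the step.
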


\begin{proof}
It is clear that if $\delta(G)\geq 2$, then $G$ has a cycle $C$ that can have its edges contracted in order to obtain a $K_3$.

 By Theorem~\ref{th:main}, a graph of minimum degree at least~$3$ has a cyclic minor of minimum degree at least $3$, and a graph of minimum degree at least eight has a cyclic minor of average degree at least six.
Suppose first that a graph $G$ has minimum degree at least three, hence there is a cyclic minor $F$ of $G$ with minimum degree at least $3$. Let $C$ be a Hamiltonian cycle of $F$. Choose a chord $uv$ of $C$ and a subpath $P$ of $C$ with ends $u$ and $v$ so that the path $P$ is as short as possible.
Let $z\in V(P)\setminus\{u,v\}$ be an arbitrary vertex.  Since $F$ has minimum degree at least three,
$z$ is incident with a chord $zx$, and by the minimality of $|E(P)|$, we have $x\not\in E(P)$.
Hence, there exist pairwise vertex-disjoint subpaths $P_1$, \ldots, $P_4$ of $C$ such that
$V(C)=V(P_1)\cup\ldots\cup V(P_4)$, $u\in V(P_1)$, $z\in V(P_2)$, $v\in V(P_3)$, and $x\in V(P_4)$.
Contracting the edges of these paths turns $F$ into $K_4$.

Suppose next that $G$ is a graph such that $\delta(G)= 8$, hence it has a cyclic minor $F$ with average degree at least $6$, and thus $|E(F)|\ge 3|V(F)|$. As in the previous paragraph, consider $C$ a Hamiltonian cycle of $F$.  Without loss of generality, we can assume that
for any edge $e\in E(C)$, contracting $e$ results in a graph of average degree less than $6$.
Hence,
$$|E(F/e)|\le 3|V(F/e)|-1=3|V(F)|-4\le |E(F)|-4,$$
and thus $e$ is contained in at least three triangles in $F$.  Let $z_1$, \ldots, $z_t$ with $t\ge 3$
be the common neighbors of the ends of $e$ in $F$, in order along the path $C-e$.  We say that the vertices $z_2$, \ldots, $z_{t-1}$
are the \emph{peaks} for $e$.

Let $e=uv$ be an edge of $C$, $z$ a peak of $e$, and $P$ a path in $C-e$ from $z$ to $u$ or $v$ chosen so that $P$ is as short as possible.  By symmetry, we can assume that $u$ is an end of $P$.  Note that $u$ and $v$ have a common neighbor $z_1$ in $P-z$, since $z$ is a peak for $e$.  Let $v'$ be the neighbor of $u$ in $P$ ($v'=z_1$ is possible). Let $z'$ be a peak of the edge $uv'$.  By the minimality of $E(P)$, we have $z'\not\in V(P)$, and since $z'$ is a peak for $uv'$, we have $z'\neq v$.  Hence, $z'$ is a vertex of the path $P'=C-(V(P)\cup\{v\})$.  Observe that contracting the edges of the paths $P-\{u,z\}$ and $P'$ turns $F$ into $K_5$ (see~Fig.~\ref{fig_cyclick5}).
\end{proof}

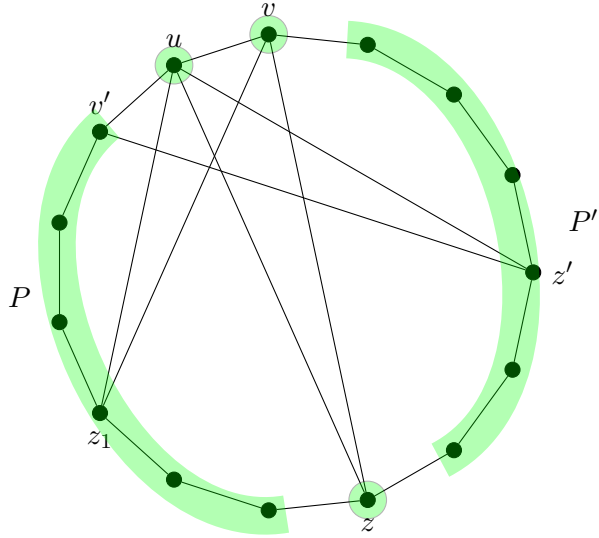
\begin{figure}[!h]
    \centering

    \begin{tikzpicture}[scale=0.4]  
    \tikzset{vertex/.style={circle, minimum size=0.2cm, fill=black, draw, inner sep=1pt}};
  \node (1) at (343.22bp,107.88bp) [vertex] {};
  \node (2) at (299.1bp,47.169bp) [vertex] {};
  \node (3) at (234.11bp,9.6447bp) [vertex, label=below:{$z$}] {};
  \node (4) at (159.47bp,1.8bp) [vertex] {};
  \node (5) at (88.097bp,24.991bp) [vertex] {};
  \node (6) at (32.325bp,75.209bp) [vertex, label=below:{$z_1$}] {};
  \node (7) at (1.8bp,143.77bp) [vertex] {};
  \node (8) at (1.8bp,218.82bp) [vertex] {};
  \node (9) at (32.325bp,287.38bp) [vertex, label={$v'$}] {};
  \node (10) at (88.097bp,337.59bp) [vertex, label={$u$}] {};
  \node (11) at (159.47bp,360.79bp) [vertex, label={$v$}] {};
  \node (12) at (234.11bp,352.94bp) [vertex] {};
  \node (13) at (299.1bp,315.42bp) [vertex] {};
  \node (14) at (343.22bp,254.7bp) [vertex] {};
  \node (15) at (358.82bp,181.29bp) [vertex, label=right:{$z'$}] {};
  \node (16) at (1.5,10.5) [vertex, opacity=0] {};
  \node (17) at (6.3,0) [vertex, opacity=0] {};
  \node (18) at (7.5,12.6) [vertex,opacity=0] {};
  \node (19) at (10,1.3) [vertex, opacity=0] {};
  \node (20) at (14,7) [vertex, label={$P'$}, opacity=0] {};
  \node (21) at (-1,5) [vertex, label={$P$}, opacity=0] {};

 \begin{pgfonlayer}{background}
\draw[cyan!30,line width=9mm,line cap=round] (4.center)--(5.center)--(6.center)--(7.center)--(8.center)--(9.center);
\draw[cyan!30,line width=9mm,line cap=round,line join=round] (2.center)--(1.center)--(15.center)--(14.center)--(13.center)--(12.center);
\end{pgfonlayer}
  
  \draw [] (1) -- (2);
  \draw [] (2) -- (3);
  \draw [] (3) -- (4);
  \draw [] (4) -- (5);
  \draw [] (5) -- (6);
  \draw [] (6) -- (7);
  \draw [] (7) -- (8);
  \draw [] (8) -- (9);
  \draw [] (9) -- (10);
  \draw [] (10) -- (11);
  \draw [] (11) -- (12);
  \draw [] (12) -- (13);
  \draw [] (13) -- (14);
  \draw [] (14) -- (15);
  \draw [] (15) -- (1);
  \draw [] (10) -- (6);
  \draw [] (11) -- (6);
  \draw [] (10) -- (3);
  \draw [] (11) -- (3);
  \draw [] (9) -- (15);  
  \draw [] (10) -- (15);
  \draw[fill=cyan,opacity=0.3] (3) circle(0.7cm);
  \draw[fill=cyan,opacity=0.3] (10) circle(0.7cm);
  \draw[fill=cyan,opacity=0.3] (11) circle(0.7cm);
  %
\end{tikzpicture}
    \caption{Model  of a $K_5$ cyclic minor.}
    \label{fig_cyclick5}
\end{figure}


Note that $f(5)\ge 6$, since the icosahedron has minimum degree~5 and does not contain $K_5$ as a minor, much less a cyclic minor. In general, we have a quadratic bound which is possible to obtain using linkages. A graph is said to be \emph{$k$-linked} if it has at least $2k$ vertices and for every sequence $s_1, \ldots, s_k, t_1, \ldots, t_k$ of distinct vertices there exist disjoint paths $P_1, \ldots, P_k$ such that the ends of $P_i$ are $s_i$ and $t_i$. 

\begin{lemma}\label{l:quadr}
$f(k)=O(k^2)$.
\end{lemma}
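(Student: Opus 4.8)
The plan is to combine the linear edge bound for linkages of Thomas and Wollan~\cite{THOMAS2005309} with an explicit ``cyclic'' model of $K_\ell$ (I write the clique size as $\ell$, so that the claim reads $f(\ell)=O(\ell^2)$). First I would record the reduction that makes a cyclic minor easy to produce once a linkage is available: a graph $G$ has $K_\ell$ as a cyclic minor as soon as one can find a cycle $C$ in $G$ together with $\binom{\ell}{2}$ of its chords forming a matching, arranged so that these chords realise all the pairs of a suitable partition of $C$ into $\ell$ arcs. Concretely, fix a matching $\{e_{ij}:1\le i<j\le\ell\}$ of size $\binom{\ell}{2}$ in $G$, and for each pair $\{i,j\}$ declare one endpoint of $e_{ij}$ to \emph{belong to bag $i$} and the other to \emph{belong to bag $j$}; write $W_i$ for the set of $\ell-1$ matched endpoints belonging to bag $i$ and $W=\bigcup_i W_i$. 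If $C$ is any cycle of $G$ passing through all of $W$ in the cyclic order $W_1,W_2,\dots,W_\ell$ (the $\ell-1$ vertices inside each $W_i$ appearing consecutively, in any fixed order), then cutting $C$ between consecutive blocks partitions it into $\ell$ arcs $A_1,\dots,A_\ell$ with $W_i\subseteq V(A_i)$, and each matching edge $e_{ij}$ is a chord joining $A_i$ to $A_j$. Hence every pair of arcs is adjacent, and contracting the arcs turns $C$ into $K_\ell$; this is exactly a cyclic $K_\ell$-minor.

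So it remains to produce such a cycle, which I would do inside a highly linked subgraph. By~\cite{THOMAS2005309} there is an absolute constant $c$ such that every graph of minimum degree at least $ct$ contains a $t$-linked subgraph $H$; I would apply this with $t=2\ell(\ell-1)$, so that minimum degree $O(\ell^2)$ suffices. Since $H$ is $t$-linked it is $2t$-connected, so its minimum degree far exceeds $\binom{\ell}{2}$ and (for $H$ large enough) it contains a matching of size $\binom{\ell}{2}$; choose it as above, fixing $W$ with $|W|=\ell(\ell-1)$. To build $C$, observe that a cycle through $W$ in the prescribed order amounts to $|W|=\ell(\ell-1)$ internally disjoint paths joining consecutive prescribed vertices. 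I would turn this into one genuine linkage by the standard private-neighbour trick: delete $W$ from $H$ (using that deleting $s$ vertices drops linkedness by at most $s$, so $H-W$ is still $(t-|W|)=\ell(\ell-1)$-linked), pick for each vertex of $W$ two distinct neighbours in $H-W$ to serve as its two \emph{ports}, and route the $\ell(\ell-1)$ connecting paths as a single linkage on these $2\ell(\ell-1)$ pairwise distinct ports. Re-attaching each vertex of $W$ to its two ports closes these paths into the desired cycle $C$, whose interior avoids $W$, so the matching edges remain honest chords. Substituting $t=2\ell(\ell-1)$ into $\delta\ge ct$ gives $f(\ell)\le 2c\,\ell(\ell-1)=O(\ell^2)$.

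The routine parts (existence of the matching, the monotonicity of $t$-linkedness under vertex deletion, and the greedy choice of distinct ports using the enormous minimum degree of $H$) I expect to cause no trouble. The one step that needs care — and which I regard as the heart of the argument — is the passage from \emph{a family of connecting paths with repeated endpoints} to \emph{one linkage with distinct terminals}: one must check that the ports can be chosen simultaneously distinct for all $|W|$ vertices and that the resulting linkage, routed entirely in $H-W$, never re-enters $W$, so that gluing produces a single cycle (rather than several) and keeps every $e_{ij}$ a chord. Once this bookkeeping is done the bound $f(\ell)=O(\ell^2)$ follows, and it is worth remarking that the quadratic loss comes precisely from needing $\binom{\ell}{2}$ pairwise disjoint chords, which is exactly what makes the improvement to $f(\ell)=O(\ell\sqrt{\log\ell})$ a delicate open question.
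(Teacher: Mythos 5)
Your proposal is correct and follows essentially the same route as the paper's proof: extract a highly linked subgraph from the minimum-degree hypothesis (note that this step needs Mader's theorem~\cite{MADER72} on highly connected subgraphs in addition to the Thomas--Wollan linkage bound~\cite{THOMAS2005309}, since minimum degree alone does not give connectivity), pick a matching of size $\binom{\ell}{2}$ whose $\ell(\ell-1)$ endpoints are grouped into $\ell$ blocks realizing every pair, use linkedness to find a cycle through all matched vertices in the prescribed cyclic order, and contract the resulting arcs to obtain $K_\ell$. The only difference is bookkeeping in the cycle-routing step: the paper links each $v_i$ to a chosen neighbour $u_i$ of $v_{i+1}$ inside the linked subgraph itself, while you delete $W$, invoke monotonicity of linkedness under vertex deletion, and route a port-to-port linkage in $H-W$ before re-attaching; both devices are valid and serve the same purpose.
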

\begin{proof}


Let $G$ be a graph such that $\delta(G)\geq  40t$, which means that its average degree is at least $40t$. By \cite{MADER72}, every graph with average degree at least $4t$ has a $(t+1)$-connected subgraph with more than $2t$ vertices. Hence, $G$ has a $(10t+10)$-connected subgraph $F$. In~\cite{THOMAS2005309}, it was proved that if a graph with $n$ vertices and at least $5tn$ edges is $2t$-connected, then it is $t$-linked. Since $F$ is $2t$-connected and has at least $(5t+5)|V(H)|>5t|V(H)|$ edges, $F$ is $t$-linked.

Note that this implies that for any distinct vertices $v_1$, \ldots, $v_t$ of $F$, there exists a cycle in $F$ passing through $v_1$, \ldots, $v_t$ in order: For each $i$, choose a vertex $u_i$ adjacent to $v_{i+1}$ (where $v_{t+1}=v_1$) so that the vertices $u_1$, \ldots, $u_t$, $v_1$, \ldots, $v_t$ are pairwise distinct.  The $t$-linkedness of $F$ implies that $F$ contains pairwise vertex-disjoint paths $P_1$, \ldots, $P_t$, where $P_i$ has ends $u_i$ and $v_i$ for each $i$. The concatenation of these paths gives the desired cycle.

If, given a high constant $c$, $G$ is a graph of minimum degree at least $ck^2$, consider a $k^2$-linked subgraph $H$ of $G$ and choose a matching $M$ of size $\binom{k}{2}$ in $H$. Label the vertices incident with $M$ by labels $v_1$, \ldots, $v_{k(k-1)}$ so that letting $V_i=\{v_{(k-1)(i-1)+1},\ldots, v_{(k-1)(i-1)+(k-1)}\}$, for all distinct $i,j\in \{1,\ldots,k\}$, an edge of $M$ has one end in $V_i$ and the other end of $V_j$. Let $C$ be a cycle in $H$ passing through $v_1$, \ldots, $v_{k(k-1)}$ in order.  Contracting the edges of pairwise vertex-disjoint subpaths of $C$ containing $V_1$, \ldots, $V_k$ gives $K_k$ as a cyclic minor of $G$.
\end{proof}

Note that~Lemma~\ref{l:quadr} shows the existence of a $K_k$ cyclic minor in a graph with quadratic minimum degree, but this may not be the case if the Hamiltonian cycle is prescribed, as in~Lemma~\ref{l:clique}. When only contracting the edges of the Hamiltonian cycle $C$, average degree 3 and 6 give $K_4$ and $K_5$ (respectively) as cyclic minors, but we do not know the exact value for $K_6$. However, no bound on the average degree provides a $K_7$ cyclic minor obtained by contracting edges of $C$. For instance, $C$ can have many crossing chords arranged in a bipartite configuration (see~Fig.~\ref{fig_bipk7}).

\begin{figure}[!h]
    \centering

    \begin{tikzpicture}[scale=0.5]  
    \tikzset{vertex/.style={circle, minimum size=0.2cm, fill=black, draw, inner sep=1pt}};
\node (1) at (343.22bp,107.88bp) [vertex] {};
  \node (2) at (299.1bp,47.169bp) [vertex] {};
  \node (3) at (234.11bp,9.6447bp) [vertex] {};
  \node (4) at (159.47bp,1.8bp) [vertex] {};
  \node (5) at (88.097bp,24.991bp) [vertex] {};
  \node (6) at (32.325bp,75.209bp) [vertex] {};
  \node (7) at (1.8bp,143.77bp) [vertex] {};
  \node (8) at (1.8bp,218.82bp) [vertex] {};
  \node (9) at (32.325bp,287.38bp) [vertex] {};
  \node (10) at (88.097bp,337.59bp) [vertex] {};
  \node (11) at (159.47bp,360.79bp) [vertex] {};
  \node (12) at (234.11bp,352.94bp) [vertex] {};
  \node (13) at (299.1bp,315.42bp) [vertex] {};
  \node (14) at (343.22bp,254.7bp) [vertex] {};
  \node (15) at (358.82bp,181.29bp) [vertex] {};
  \node (16) at (1.5,10.5) [vertex, opacity=0] {};
  \node (17) at (6.3,0) [vertex, opacity=0] {};
  \node (18) at (7.5,12.6) [vertex,opacity=0] {};
  \node (19) at (10,1.3) [vertex, opacity=0] {};

  \draw [] (1) -- (2);
  \draw [] (2) -- (3);
  \draw [] (3) -- (4);
  \draw [] (4) -- (5);
  \draw [] (5) -- (6);
  \draw [] (6) -- (7);
  \draw [] (7) -- (8);
  \draw [] (8) -- (9);
  \draw [] (9) -- (10);
  \draw [] (10) -- (11);
  \draw [] (11) -- (12);
  \draw [] (12) -- (13);
  \draw [] (13) -- (14);
  \draw [] (14) -- (15);
  \draw [] (15) -- (1);
  \draw [] (1) -- (9);
  \draw [] (1) -- (10);
  \draw [] (1) -- (11);
  \draw [] (1) -- (12);
  \draw [] (1) -- (13);
  \draw [] (2) -- (9);
  \draw [] (2) -- (10);
  \draw [] (2) -- (11);  
  \draw [] (2) -- (12);
  \draw [] (2) -- (13);
  \draw [] (3) -- (9);
  \draw [] (3) -- (10);
  \draw [] (3) -- (11);
  \draw [] (3) -- (12);
  \draw [] (3) -- (13);
  \draw [] (4) -- (9);
  \draw [] (4) -- (10);
  \draw [] (4) -- (11);
  \draw [] (4) -- (12);
  \draw [] (4) -- (13);
  \draw [] (5) -- (9);
  \draw [] (5) -- (10);
  \draw [] (5) -- (11);
  \draw [] (5) -- (12);
  \draw [] (5) -- (13);
  
  %
\end{tikzpicture}
    \caption{Cycle with chords in bipartite configuration.}
    \label{fig_bipk7}
\end{figure}
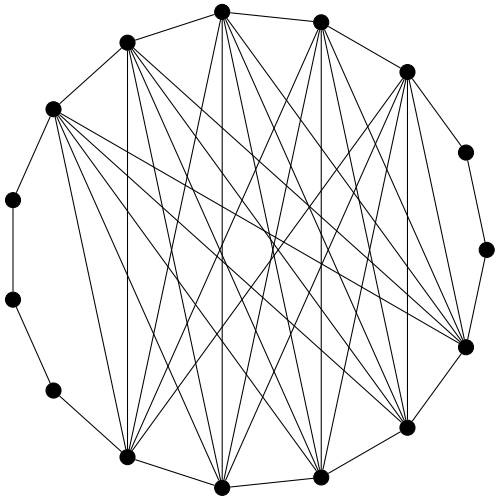


Note that a $K_7$ cyclic minor cannot exist in such a bipartite configuration with partite sets $A$ and $B$. Indeed, there would be three contracted sets forming vertices of $K_7$ which would be entirely in $A$ or entirely in $B$, but then two of them would not be adjacent. Bipartite configurations can also be used to prove that if $C$ has large average degree, then one can form $K_6$ by contracting its edges. To see this, apply the Marcus-Tardos Theorem (as in the beginning of this section) to form a bipartite configuration $X_1,X_2,X_3,X_4,Y_1,Y_2,Y_3,Y_4$ and furthermore contract $Y_4,X_1$ and $X_4,Y_1$ to form $K_6$.



\section{Concluding remarks and open problems}
\label{sec:open}

What we do here is in fact algorithmic, in the sense that we implicitly describe a polynomial-time algorithm whose input is a graph with minimum degree at least $k$ and whose output are cycles satisfying the conclusion of Theorem~\ref{th:main}.  However, finding an optimal lollipop is NP-hard, since solving it in polytime would imply finding a Hamiltonian cycle in polytime. Also, there might be exponentially many active paths in a lollipop. So we need to briefly explain our algorithmic claim. 

In fact we neither need an optimal lollipop nor the set of all active paths to obtain the properties we want. What we need is a lollipop and a set of at least $k$ (sometimes $k+1$) active paths, all with  distinct ends.  Formally, we may compute them as follows (note that we maintain a set $\mathcal A$ of active vertices discovered so far):

\begin{itemize}
    \item    
$\mathcal S_1 = \{c_1 c_2 \dots c_t, c_1c_t c_{t-1} \dots c_2\}$, $\mathcal A = \{c_2, c_t\}$.
\item 
For all $i\geq 1$, let us define $\mathcal S_{i+1}$ from $\mathcal S_{i}$. For each $Q = c_1\dots u \in \mathcal S_i$ and all vertices $v$ such that $uv$ is a chord of $Q$, let $w$ be the neighbor of $v$ in $vQu$. If $vw$ is an edge of $C$ and $w\notin \mathcal A$, then add the path $c_1QvuQw$ to $\mathcal S_{i+1}$ and $w$ to $\mathcal A$. 
\end{itemize}

The proofs of the lemmas from Section~\ref{sec:proofs} can now be seen as the description of a procedure $P$ whose input is a graph $G$ with minimum degree at least $k$, and a lollipop of $G$ that either outputs a better lollipop (with more vertices or a longer cycle), or the cycle described in Theorem~\ref{th:main}. Let us explain some key steps justifying this claim. Each time the end of an active path has neighbors outside of the cycle of the lollipop, a better lollipop can be computed as explained in the proof of Lemma~\ref{lem:activeneighborhood}. Also, for each new active vertex, we test whether its neighborhood is actually in the cycle. All this guarantees that the computation is performed in polynomial time. Note that the proofs of Lemma~\ref{lem:allpath} and Lemma~\ref{lem:nonactivepath} remain correct with these new settings. 

Now, the global algorithm starts with a call to DFS (complexity of $O(n+m)$) to find the first lollipop, and then to the procedure $P$. While the call to $P$ fails to produce the cycles we want, it must be that a better lollipop is discovered, in which case we call $P$ again with the new lollipop. Since each new lollipop has either a longer cycle or more vertices, there are at most $n^2$ calls to $P$.

Back to the original definition of active paths, we have several questions. First, is every Hamiltonian path starting in $c_1$ active? The answer is negative, since in the complete graph $K_6$ with the Hamiltonian cycle $(c_1,\dots ,c_6)$, the path $Q=c_1c_2c_5c_4c_3c_6$ is not active (there are 6 non active paths among the 120 paths starting by $c_1$). To see this, note that $Q$ could be either generated from $Q_1=c_1c_6c_3c_4c_5c_2$ or from $Q_2=c_1c_2c_5c_6c_3c_4$. But $Q_2$ can only be generated from $Q$, and the only possible generator for $Q_1$ (apart from $Q$) is $c_1c_6c_3c_2c_5c_4$ which can only be generated from $Q_1$. This fact leads to the two following questions:

\begin{ques}
Is there a simple criterion for a sequence of vertices to be an active path of $K_n$? 
\end{ques}

\begin{ques}
Is there a simple way to compute all the active vertices in the cycle of an optimal lollipop? 
\end{ques}

High minimum degree indeed provides cycles with many chords but one may wonder whether a similar statement could still hold for graphs with minimum degree 3. Since the disjoint union of many copies of $K_4$ does not contain a cycle with more than two chords, a natural question is to consider graphs with high girth $g$. It turns out that in the lollipop argument, the number of distinct active vertices (and thus chords) increases with respect to $g$. It is then reasonable to link the number of chords to the length of the host cycle. This leads to the following question:

\begin{ques} 
Is there a constant $c>0$ such that every graph with minimum degree 3 contains a cycle of length $\ell$ with at least $c \ell$ chords? 
\end{ques}

We are very far from answering the previous problem, as even the following relaxation seems unclear:

\begin{ques}
Is there a function $f$ that tends to $+\infty$ such that every graph with minimum degree 3 contains a cycle of length $\ell$ with at least $f(\ell)$ chords?
\end{ques}

    Note that the following question can be entirely solved: does sufficiently large minimum degree imply the existence of a cycle with \emph{exactly} $\ell$ chords? The answer is positive for $\ell =0$, as one can consider any induced cycle. Then the answer becomes negative for all $\ell=1,\dots ,34$, and becomes positive again for $\ell=35$. To explain this, note that cliques only contain cycles whose number of chords are expressible as $a(a-3)/2$ for an integer $a$, and bicliques only contain cycles whose number of chords are expressible as $b^2-2b$ for an integer~$b$. So the only candidates for $\ell$ are values both expressible as $a(a-3)/2$ and as $b^2-2b$. It turns out that all these candidate values give positive answers. Indeed,  it has been announced in \cite{pc} that  a graph of sufficiently large minimum degree contains $K_a$, $K_{b,b}$, or an induced cycle~$C$ and a vertex $v$ with at least $\ell+3$ neighbors in $C$. In this latter case, $v$ together with a subpath of $C$ forms a cycle with exactly $\ell$ chords. The smallest positive candidate $\ell$ is 35, both corresponding to the number of chords in the hamiltonian cycle of  $K_{10}$ and of $K_{7,7}$.  Let us remark that there are infinitely many values of $\ell$ with the described property, since they correspond to solutions to a Pell's equation.

\subsection*{Chromatic number and chords}

In the context of bounding the chromatic number, chords in cycles also attracted some attention. The contributions in this line of research make no reference to~\cite{GUPTA198037}. So it might be useful to  list  the consequences of Theorem~\ref{th:main} regarding the chromatic number and put them in the right context. 

In~\cite{mft:chordless}, a structure theorem for graphs where no cycle has a chord is described. A stronger theorem (where only cycles with a unique chord are excluded) is presented in~\cite{nicolas.kristina:one}, and it is generalized to graphs where no cycle of length at least~5 has a unique chord in~\cite{DBLP:journals/jgt/TrotignonP18}. All these structural descriptions imply upper bounds on the chromatic number of the graphs under consideration. In \cite{DBLP:journals/dam/AboulkerB15}, it is shown that graphs with no cycle with exactly two chords are 6-colorable, while graphs with no cycle with exactly three chords satisfy bounds on their chromatic number. Also bounds on the chromatic number of graphs containing no cycles with exactly $k$ chords are conjectured, and they are proved in~\cite{leeLP:chords} for sufficiently large~$k$. From Theorem~\ref{th:main}, it is possible to obtain the following Corollary. A graph is \emph{$k$-degenerate} if all its subgraphs contain a vertex of degree at most~$k$. 

    \begin{cor}
        \label{cor:degenerate}
     For all integers $\ell \geq 0$, if every cycle of a graph $G$ has less than $\ell$ chords, then $G$ is $\left\lceil\frac{-1+\sqrt{9+8\ell}}{2}\right\rceil$-degenerate and therefore $\left\lceil\frac{1+\sqrt{9+8\ell}}{2}\right\rceil$-colorable.
    \end{cor}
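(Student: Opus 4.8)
The plan is to derive this as the contrapositive of the first conclusion of \cref{th:main}, applied hereditarily to every subgraph. First I would record that the hypothesis passes to subgraphs: if $H$ is a subgraph of $G$ and $D$ is a cycle of $H$, then $D$ is also a cycle of $G$, and every chord of $D$ in $H$ is in particular a chord of $D$ in $G$, so $D$ has fewer than $\ell$ chords in $H$ as well. Thus it suffices to bound the minimum degree of an arbitrary graph all of whose cycles have fewer than $\ell$ chords, and then invoke the standard fact that a graph in which every subgraph contains a vertex of degree at most $d$ is $d$-degenerate and hence $(d+1)$-colorable by greedy coloring along a degeneracy ordering.

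Next I would pin down the correct threshold. Let $k=\left\lceil\frac{1+\sqrt{9+8\ell}}{2}\right\rceil$. Solving $\frac{(k+1)(k-2)}{2}\ge \ell$, equivalently $k^2-k-(2+2\ell)\ge 0$, by the quadratic formula shows that its positive root is $\frac{1+\sqrt{9+8\ell}}{2}$, so this $k$ is exactly the least integer for which $\frac{(k+1)(k-2)}{2}\ge\ell$. Since $9+8\ell\ge 9$ for every $\ell\ge 0$, we always have $k\ge 2$, so \cref{th:main} is genuinely applicable with this value of $k$.

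The core step is then immediate: suppose, for contradiction, that some subgraph $H$ has minimum degree at least $k$. As $k\ge 2$, \cref{th:main} produces a cycle of $H$ with at least $\frac{(k+1)(k-2)}{2}\ge \ell$ chords, contradicting the (hereditary) hypothesis. Hence every subgraph of $G$ contains a vertex of degree at most $k-1$, i.e.\ $G$ is $(k-1)$-degenerate. Finally I would rewrite the threshold in the stated closed form using the elementary identity $\lceil x\rceil-1=\lceil x-1\rceil$: this yields degeneracy $k-1=\left\lceil\frac{-1+\sqrt{9+8\ell}}{2}\right\rceil$ and chromatic number at most $(k-1)+1=k=\left\lceil\frac{1+\sqrt{9+8\ell}}{2}\right\rceil$, as claimed.

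There is essentially no hard obstacle here, since the corollary is a direct packaging of \cref{th:main}. The only points requiring care are the direction of the inequality when taking the contrapositive, confirming that $k\ge 2$ so that \cref{th:main} applies (which holds for all $\ell\ge 0$), and the ceiling algebra that puts $k$ and $k-1$ into the displayed forms.
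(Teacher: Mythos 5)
Your proof is correct and takes essentially the same approach as the paper: both argue by contradiction, taking a subgraph of minimum degree at least $\left\lceil\frac{1+\sqrt{9+8\ell}}{2}\right\rceil$ and invoking Theorem~\ref{th:main} to produce a cycle with at least $\frac{(k+1)(k-2)}{2}\ge\ell$ chords. The only cosmetic differences are that you work with the integer ceiling and the identity $\lceil x\rceil-1=\lceil x-1\rceil$, whereas the paper bounds $\frac{(\lceil k\rceil+1)(\lceil k\rceil-2)}{2}$ from below by its value at the real threshold $k=\frac{1+\sqrt{9+8\ell}}{2}$, and that you spell out the heredity of the hypothesis and the check $k\ge 2$, which the paper leaves implicit.
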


    \begin{proof}
      Suppose for a contradiction that $G$ is not $\left\lceil\frac{-1+\sqrt{9+8\ell}}{2}\right\rceil$-degenerate. So, $G$ has a subgraph with minimum degree at least $\lceil k\rceil$, where $$k = \frac{-1+\sqrt{9+8\ell}}{2} + 1 = \frac{1+\sqrt{9+8\ell}}{2}.$$ Hence, by Theorem~\ref{th:main}, $G$ contains a cycle whose number of chords is at least:  
      $$\frac{(\lceil k\rceil +1)(\lceil k\rceil - 2)}{2} \geq \frac{( k +1)(k - 2)}{2}  = \ell.$$ This contradiction proves the claim about the degeneracy and the claim about colorability follows.  
    \end{proof}
    

    Note that for $\ell=0$, Corollary~\ref{cor:degenerate} restates a well-known fact: every forest is 1-degenerate. For $\ell=1$, it states that graphs where all cycles are chordless are 2-degenerate, a result already present in~\cite{mft:chordless}.  Observe also that Corollary~\ref{cor:degenerate} is tight for all $\ell \geq 0$. Indeed, for $k\geq 1$, set $$I_k = \left\{ \ell \,\middle|\, \left\lceil\frac{-1+\sqrt{9+8\ell}}{2}\right\rceil = k\right\}.$$ Every $\ell$ is in some $I_k$. A simple computation shows that $$I_k = \left\{\ell \,\middle|\, \frac{(k+1)(k-2)}{2} < \ell \leq \frac{(k+2)(k-1)}{2}\right\}.$$ So, for any $\ell\in I_k$, Corollary~\ref{cor:degenerate} says that a graph with all cycles having less than $\ell$ chords is $k$-degenerate, and this is best possible since every cycle in $K_{k+1}$ has at most $\frac{(k+1)(k-2)}{2} < \ell$ chords, while $K_{k+1}$ is not $(k-1)$-degenerate.

\section*{Acknowledgement}

The authors thank Matthias Thomassé for the non active path of $K_6$ and Dan Kr\'al' for pointing out to us~\cite{GUPTA198037} and~\cite{kral03}.


\end{document}